\theoremstyle{plain}
\newtheorem{thm}{\protect\theoremname}
  \theoremstyle{plain}
  \newtheorem{prop}[thm]{\protect\propositionname}
  \theoremstyle{remark}
  \newtheorem*{rem*}{\protect\remarkname}
\newcounter{EQNR}
  \providecommand{\propositionname}{Proposition}
  \providecommand{\remarkname}{Remark}
\providecommand{\theoremname}{Theorem}
\begin{document}

\title{Generalized Lyapunov exponents and aspects of the theory of deep
learning}

\author{Anders Karlsson\footnote{The author was supported in part by the Swedish Research Council grant 104651320 and the Swiss NSF grants 200020-200400 and 200021-212864.}}

\date{December 26, 2022}
\maketitle
\begin{abstract}
We discuss certain recent metric space methods and some of the possibilities
these methods provide, with special focus on various generalizations
of Lyapunov exponents originally appearing in the theory of dynamical
systems and differential equations. These generalizations appear for
example in topology, group theory, probability theory, operator theory
and deep learning. 
\end{abstract}

\section{Introduction}

The law of large numbers states that for a sequence of independent,
identically distributed (i.i.d.) random variables $X_{1},X_{2},...,X_{n}$
with finite expectation, 
\begin{equation}
\left(X_{1}+X_{2}+...+X_{n}\right)/n\rightarrow\mathbb{E}\left[X_{1}\right]\label{eq:LLN}
\end{equation}
almost surely as $n\rightarrow\infty$. Bellman \cite{Be54}, Furstenberg
\cite{Fu63} and others asked whether in some situations there could
exist a similar limit law for products 

\[
u(n):=g_{1}g_{2}g_{3}...g_{n}
\]
of i.i.d. noncommutative operations $g_{1,}g_{2},...,g_{n}$. Such
products appear for example as solutions to difference equations with
random coefficients, or from time-one maps of the solutions of continuous
models, say from a stochastic PDE. In addition to mathematics and
physics, one can find papers in biology, epidemiology, medicine, and
economics leading to random products of noncommuting transformations
\cite{BHS21,CDS09,IS96,Neu19}. Compositional products is also one
of the key features of deep learning as will be highlighted below.

Note that in contrast to \eqref{eq:LLN} it is unclear how to form
an average in the noncommutative setting. Important partial answers
to the above question was obtained at the end of the 1960s (\cite{Ki68,O68})
and later as one aspect of random walks on groups, see for example
\cite{Gu80,KM99,Ka00,Er10,BQ16,MT18,Zh22}. A quite general affirmative
answer to the question of a limit law for noncommuting random products
was provided in \cite{KL06,GK20}, see Theorem \ref{thm:ergodic}
below.

In ergodic theory one formalizes the setting as follows, more general
than the i.i.d. assumption. Let $(\Omega,\mu)$ be a measure space
with $\mu(\Omega)=1$. Let $T:\Omega\rightarrow\Omega$ be a measurable
map preserving the measure. We furthermore assume ergodicity, which
is an irreducibility assumption that states that up to measure zero
there are no $T$-invariant subsets of $\Omega$. Given a measurable
map $g:\Omega\rightarrow G$ (assigning some measurable structure
on the group; $g$ is what a probabilist would call a random variable),
we define the following \emph{ergodic cocycle}:
\[
u(n,\omega):=g(\omega)g(T\omega)...g(T^{n-1}\omega).
\]

In addition, one needs to assume that the cocycle is integrable which
means that the integral over $\Omega$ of the ``size'' of $g(\omega)$
is finite. 

For matrices, a first answer was provided by Furstenberg-Kesten for
the norm of the matrices, and a more precise answer was given later
in the 1960s by Oseledets in his multiplicative ergodic theorem. One
can view this as a random spectral theorem, intuitively it says that
the random product behaves in the same way as the powers of one single
``average'' matrix:
\begin{thm}
\emph{(Oseledets' multiplicative ergodic theorem \cite{O68})} \label{thm:Osel}Given
an integrable ergodic cocycle $A(n,\omega)=g_{n}g_{n-1}...g_{1}$
of invertible matrices, there are a.s. a random filtration of subspaces
$0=V_{0}\subset V_{1}\subset...\subset V_{k}=\mathbb{R}^{d}$ and
numbers $\lambda_{1}<\lambda_{2}<...<\lambda_{k}$ such that 
\[
\lim_{n\rightarrow\infty}\frac{1}{n}\log\left\Vert A(n,\omega)v\right\Vert =\lambda_{i}
\]
whenever $v\in V_{i}\setminus V_{i-1}.$
\end{thm}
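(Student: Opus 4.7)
The plan is to reduce Oseledets' theorem to Kingman's subadditive ergodic theorem applied to all exterior powers of the cocycle, and then to construct the filtration from the limiting singular directions of $A(n,\omega)$.

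First, I would apply Kingman's subadditive ergodic theorem to the sequences $f_{k}(n,\omega) := \log\|\Lambda^{k}A(n,\omega)\|$ for $k=1,2,\ldots,d$. Because $\Lambda^{k}$ is multiplicative and submultiplicative in operator norm, each $f_{k}$ is a subadditive cocycle, and the integrability hypothesis on $g$ passes to exterior powers via $\|\Lambda^{k}g\|\le \|g\|^{k}$. By ergodicity this produces almost sure constants $\gamma_{k}$ with
\[
\lim_{n\to \infty}\tfrac{1}{n}\log\|\Lambda^{k}A(n,\omega)\| = \gamma_{k}.
\]
Since $\|\Lambda^{k}A(n,\omega)\|$ is the product of the top $k$ singular values of $A(n,\omega)$, the differences $\gamma_{k}-\gamma_{k-1}$ (with $\gamma_{0}:=0$) are precisely the Lyapunov exponents counted with multiplicity, listed in non-increasing order; their distinct values, re-labelled in increasing order, produce $\lambda_{1}<\cdots <\lambda_{k}$, with the multiplicity of each $\lambda_{i}$ determining the jump in dimension from $V_{i-1}$ to $V_{i}$.

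Second, to produce the filtration itself I would follow Raghunathan's strategy: consider the positive symmetric matrices $P_{n}(\omega):=(A(n,\omega)^{*}A(n,\omega))^{1/(2n)}$ and prove that $P_{n}(\omega)$ converges almost surely to a positive definite symmetric matrix $\Lambda(\omega)$ whose eigenvalues are the $e^{\lambda_{i}}$. Letting $E_{i}(\omega)$ denote the eigenspace of $\Lambda(\omega)$ for $e^{\lambda_{i}}$, the forward filtration is $V_{i}(\omega):=E_{1}(\omega)\oplus \cdots \oplus E_{i}(\omega)$. The growth formula follows: writing $v = v_{1}+\cdots +v_{i}$ with $v_{j}\in E_{j}(\omega)$ and $v_{i}\neq 0$, one uses $\|A(n,\omega)v\|^{2}=v^{T}P_{n}(\omega)^{2n}v$ together with the convergence $P_{n}\to \Lambda$ to see that the dominant contribution is $e^{2n\lambda_{i}}\|v_{i}\|^{2}$, giving $\tfrac{1}{n}\log\|A(n,\omega)v\|\to \lambda_{i}$.

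The main obstacle is the almost sure convergence of $P_{n}(\omega)$: the spectral gaps $\lambda_{i+1}-\lambda_{i}>0$ extracted in the first step must be leveraged to propagate a contraction of angles between approximate eigenspaces across the cocycle identity $A(n+m,\omega)=A(m,T^{n}\omega)A(n,\omega)$, which is the delicate part of the argument. A more conceptual route, consistent with the metric-space viewpoint emphasized in this paper, is to regard $P_{n}(\omega)$ as an orbit in the nonpositively curved symmetric space $GL_{d}(\mathbb{R})/O(d)$ and invoke a noncommutative ergodic theorem of Karlsson--Margulis type to produce a limiting horofunction; the flag associated to that horofunction is precisely the Oseledets filtration. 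Either route closes the argument once the exterior-power exponents have been identified.
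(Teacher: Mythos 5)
The paper does not supply a proof of this theorem: it is stated as a classical result and cited to Oseledets \cite{O68}, serving as motivation and a point of comparison for the metric-space generalizations that are the paper's actual subject. So there is no in-text argument to measure your proposal against directly.

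Your sketch is a correct outline of the standard Raghunathan-style proof. The reduction to Kingman via exterior powers is right, including the integrability transfer $\|\Lambda^k g\|\le\|g\|^k$ and the identification of $\gamma_k-\gamma_{k-1}$ with the Lyapunov exponents with multiplicity. Defining $V_i(\omega)=E_1(\omega)\oplus\cdots\oplus E_i(\omega)$ from the eigenspaces of the a.s.\ limit $\Lambda(\omega)=\lim_n (A(n,\omega)^*A(n,\omega))^{1/(2n)}$ is exactly the Oseledets filtration in the one-sided setting, and for $v\in V_i\setminus V_{i-1}$ the identity $\|A(n,\omega)v\|^2=v^{T}P_n(\omega)^{2n}v$ together with eigenspace asymptotics gives the growth rate $\lambda_i$. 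You are also right to flag the genuinely hard step: convergence of $P_n(\omega)$ is not a formal consequence of the exterior-power limits, since $P_n\to\Lambda$ in operator norm does not by itself control $P_n^{2n}$; one must quantify the alignment of the approximate singular flags of $A(n,\omega)$ across the cocycle relation using the spectral gaps, and that is where the technical work lives.

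Worth noting: the ``more conceptual route'' you mention at the end is precisely the one this paper is advocating. Viewing $n\mapsto A(n,\omega)^*A(n,\omega)$ as an orbit in the nonpositively curved space $Pos\cong GL_d(\mathbb{R})/O(d)$ and extracting a limiting metric functional (horofunction) is how Theorem~\ref{thm:ergodic}, applied to that symmetric space, recovers Oseledets; the paper says as much (``recovers some previous results\dots by Oseledets\dots'') and cites \cite{KM99} and \cite{GK20} for this. The classical proof you outline buys an elementary, self-contained argument specific to matrices; the metric route buys a uniform statement that also covers infinite-dimensional operators, surface homeomorphisms, and free-group automorphisms, at the cost of requiring the subadditive refinement of \cite{GK20} and the determination of metric functionals on $Pos$.
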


The case $d=1$ is the Birkhoff ergodic theorem generalizing (\ref{eq:LLN}).
The numbers $\lambda_{i}$ are called \emph{Lyapunov exponents.} This
is a fundamental theorem in the theory of differentiable dynamical
systems and has also many other applications, the most spectacular
such is Margulis' proof \cite{M75} of his super-rigidity theorem.
As for physics, Nobel laureate Parisi wrote in the foreword of \cite{CPV93}
that ``The properties of random matrices and their products form
a basic tool, whose importance cannot be underestimated. They play
a role as important as Fourier transforms for differential equations.''

Now compare the above with the following. Let $\Sigma$ be an oriented
closed surface of genus $g\geq2$. Let $\mathcal{S}$ denote the isotopy
classes of simple closed curves on $M$ not isotopically trivial.
For a Riemannian metric $\rho$ on $\Sigma$, let $l_{\rho}(\beta)$
be the infimum of the length of curves isotopic to $\beta.$ In a
legendary preprint from 1976 \cite{T88}, Thurston announced the following
(the details are worked out in \cite[Théorème Spectrale]{FLP79} using
foliation theory): 
\begin{thm}
\label{thm:Thurston-1}\emph{(Thurston's spectral theorem for surface
diffeomorphisms \cite{T88}) }Given a diffeomorphism $f$ of a surface
$\Sigma$ of genus $g\geq2$. Then there is filtration of subsurfaces
$Y_{1}\subset Y_{2}\subset...\subset Y_{k}=\Sigma$ and algebraic
integers $\lambda_{1}<\lambda_{2}<..<\lambda_{k}$ such that
\[
\lim_{n\rightarrow\infty}\frac{1}{n}\log l_{\rho}(f^{n}c)=\lambda_{i}
\]
whenever the simple closed curve $c$ can be isotoped to a curve contained
in $Y_{i}$ but not in $Y_{i-1}$. 
\end{thm}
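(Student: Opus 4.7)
The plan is to deduce the statement from the Nielsen--Thurston classification of surface diffeomorphisms combined with the basic dynamics of pseudo-Anosov maps on measured foliations.

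First, I would invoke the classification: after replacing $f$ by a suitable iterate $f^N$ (and rescaling the exponents by $1/N$ at the end), one may assume $f$ is isotopic to a diffeomorphism $\varphi$ fixing a canonical disjoint system $\mathcal{C}$ of essential simple closed curves, with $\varphi$ restricting on each component $\Sigma_j$ of $\Sigma \setminus \mathcal{C}$ to either a finite-order map or a pseudo-Anosov map. This classification is established via the dynamics of the mapping class group action on the sphere $\mathcal{PMF}(\Sigma)$ of projective measured foliations, applying a Brouwer-type fixed point argument. Since $l_\rho$ depends only on the isotopy class of a curve, $l_\rho(f^n c) = l_\rho(\varphi^n c)$ and passing from $f$ to $\varphi$ is harmless.

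Next, I would analyze the pseudo-Anosov pieces. For each pseudo-Anosov component $(\Sigma_j, \varphi_j)$, Thurston's theory provides a pair of transverse measured foliations $\mathcal{F}^s_j, \mathcal{F}^u_j$ and a dilatation $\mu_j > 1$ such that $\varphi_j$ scales the transverse measure of $\mathcal{F}^u_j$ by $\mu_j^{-1}$ and that of $\mathcal{F}^s_j$ by $\mu_j$. A train-track carrying argument identifies $\mu_j$ as the Perron--Frobenius eigenvalue of a non-negative integer matrix, hence as an algebraic integer. For any essential simple closed curve $c$ isotoped into $\Sigma_j$, comparing $\rho$ to the singular flat metric induced by $(\mathcal{F}^s_j, \mathcal{F}^u_j)$ yields the key estimate
\[
l_\rho(f^n c) \asymp \mu_j^n\, \iota(c, \mathcal{F}^s_j) + \mu_j^{-n}\, \iota(c, \mathcal{F}^u_j),
\]
where $\iota(\cdot,\cdot)$ denotes geometric intersection / transverse measure and $\asymp$ means comparable up to multiplicative constants depending only on $\rho$. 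Since $\iota(c, \mathcal{F}^s_j) > 0$ for every essential $c$, dividing by $n$ and taking $\log$ gives $\frac{1}{n} \log l_\rho(f^n c) \to \log \mu_j$. On a finite-order piece, $l_\rho(f^n c)$ stays bounded, giving exponent $0$.

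Finally, I would assemble the filtration. Let $\lambda_1 < \cdots < \lambda_k$ enumerate the distinct values of $\log \mu_j$ as $j$ ranges over the complementary pieces (including $0$ if some piece is periodic), and let $Y_i$ be the closure in $\Sigma$ of the union of those pieces $\Sigma_j$ with $\log \mu_j \leq \lambda_i$, together with the adjacent reducing curves in $\mathcal{C}$. A simple closed curve isotopable into $Y_i$ but not into $Y_{i-1}$ must essentially lie in a piece whose exponent equals $\lambda_i$ (for $i < k$) or essentially cross $\mathcal{C}$ and be controlled by the maximum exponent $\lambda_k$ (for $i = k$), and the preceding paragraph supplies the required limit in either case. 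The main obstacle is the \emph{existence} of the pseudo-Anosov structure with its pair of expanding/contracting transverse measured foliations: this is the deep half of the Nielsen--Thurston theorem and occupies the bulk of \cite{FLP79}. Once granted, the spectral statement reduces to systematic bookkeeping with transverse measures.
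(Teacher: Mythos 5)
Note first that the paper itself does not prove this theorem; it is stated as Thurston's result with the proof explicitly deferred to \cite{FLP79}. Your proposal takes the standard route through the Nielsen--Thurston classification, and your analysis of the pseudo-Anosov and finite-order cases for a curve that lives entirely in one complementary piece is essentially right. (A small wrinkle in the paper's formulation that you implicitly correct: with the limit written as $\frac{1}{n}\log l_\rho(f^n c)$, the numbers $\lambda_i$ are logarithms of the dilatations, and it is $e^{\lambda_i}$ -- not $\lambda_i$ -- that is the Perron--Frobenius algebraic integer.)

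The genuine gap is in your final ``assembly'' paragraph, which is exactly where the reducible case stops being bookkeeping. Your case distinction is incorrect: for \emph{every} $i$, not only $i=k$, a curve isotopable into $Y_i$ but not $Y_{i-1}$ may cross several pieces through the reducing multicurve $\mathcal{C}$; it need not lie in a single piece of exponent $\lambda_i$, it need only meet at least one such piece essentially while staying in $Y_i$. For such a curve $c$ you still have to prove three things that the ``preceding paragraph'' does not supply, since it only treats single-piece curves: (i) a lower bound $\gtrsim e^{\lambda_i n}$ from the $\lambda_i$-piece $c$ enters, using that the number of strands $\iota(f^n c,\gamma)=\iota(c,\gamma)$ is constant for each $\gamma\in\mathcal{C}$ because $f$ fixes $\gamma$ up to isotopy; (ii) an upper bound from the fact that every piece $c$ meets has exponent $\leq\lambda_i$; and, crucially, (iii) that the Dehn-twisting which $f$ may perform in the annuli around $\mathcal{C}$ adds only polynomially in $n$ to $l_\rho(f^n c)$, hence does not raise the exponential rate. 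Item (iii) is the real content of the reducible case and you do not address it. Finally, your filtration is off at the bottom: every reducing curve in $\mathcal{C}$ is $f$-invariant up to isotopy and so has exponent $0$, regardless of whether any complementary piece is periodic. Whenever $\mathcal{C}\neq\emptyset$ one must set $\lambda_1=0$ and build $Y_1$ from a regular neighborhood of $\mathcal{C}$ together with the finite-order pieces; your parenthetical ``including $0$ if some piece is periodic'' misses this.
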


\begin{wrapfigure}{o}{0.5\columnwidth}%
\includegraphics[scale=0.75]{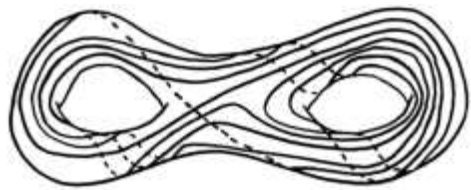}\caption{$f^{n}(c)$ (illustration from \cite{T88})}
\end{wrapfigure}%
This is analogous to a simple statement for linear transformations
$A$ in finite dimensions (which corresponds to the Oseledets theorem
in the case $A(n,\omega)=A^{n}$): given a vector $v$ there is an
associated exponent $\lambda$ (absolute value of an eigenvalue),
such that
\[
\lim_{n\rightarrow\infty}\left\Vert A^{n}v\right\Vert ^{1/n}=\lambda.
\]

To spell out the analogy: a diffeomorphism $f$ instead of a linear
transformation $A$, a length instead of a norm, and a curve $\alpha$
instead of a vector $v$. And the answer is given in similar terms:
Lyapunov exponents and associated filtration of subspaces and subsurfaces
respectively.

A \emph{weak metric space }(following the terminology of for example
\cite{GuW22}) is a set $X$ equipped with a function $X\times X\rightarrow\mathbb{R}$
such that 
\[
d(x,x)=0
\]
and
\[
d(x,y)\leq d(x,z)+d(z,y)
\]
for all points $x,y,z\in X$. A map $f:X\rightarrow X$ is called
\emph{nonexpansive} if 
\[
d(f(x),f(y))\leq d(x,y)
\]
for all $x,y\in X$.

The following multiplicative ergodic theorem was proved for isometries
in \cite{KL06} and in general in \cite{GK20}.
\begin{thm}
\label{thm:ergodic}\emph{(Ergodic theorem for noncommuting random
products, \cite{KL06,GK20}) }Let $u(n,\omega)$ be an integrable
ergodic cocycle of nonexpansive maps of a weak metric space $(X,d)$
and such that $\omega\mapsto u(n,\omega)x$ is measurable. Then there
exists a.s. a metric functional $h$ of $X$ such that
\[
\lim_{n\rightarrow\infty}-\frac{1}{n}h(u(n,\omega)x)=\lim_{n\rightarrow\infty}\frac{1}{n}d(x,u(n,\omega)x).
\]
\end{thm}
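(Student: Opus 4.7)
The plan is to combine Kingman's subadditive ergodic theorem with a compactness argument in the metric compactification of $(X,d)$.

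First I would consider the distance cocycle $a(n,\omega):=d(x,u(n,\omega)x)$. The cocycle identity $u(n+m,\omega)=u(n,\omega)\circ u(m,T^{n}\omega)$ combined with nonexpansivity of $u(n,\omega)$ yields
\[
a(n+m,\omega)\le a(n,\omega)+a(m,T^{n}\omega),
\]
so $a$ is a subadditive ergodic cocycle. By integrability and Kingman's subadditive ergodic theorem, the constant limit $A:=\lim_n a(n,\omega)/n$ exists almost surely; this is the right-hand side of the claimed identity. Next, I would introduce the metric compactification of $X$: associate to each $y\in X$ the function $h_y(z):=d(z,y)-d(x,y)$; the triangle inequality confines each $h_y$ to a product of compact intervals, so by Tychonoff the pointwise closure $\overline{X}^h$ is compact, and its elements are the metric functionals of $X$. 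Each $h\in\overline{X}^h$ satisfies $h(x)=0$ and inherits the Lipschitz bound $h(z)-h(z')\le d(z,z')$; in particular $-h(u(n,\omega)x)\le a(n,\omega)$, which delivers the upper inequality $\limsup_n(-h(u(n,\omega)x)/n)\le A$ for every metric functional $h$.

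The heart of the proof, following Karlsson--Ledrappier and Gou\"ezel--Karlsson, is to produce a single $h$ achieving the matching lower bound. The key technical input is a combinatorial/ergodic ``good times'' lemma: for almost every $\omega$ and every $\epsilon>0$, there exists an infinite set of indices $n$ at which the subadditive cocycle grows at rate at least $A-\epsilon$ in a form compatible with its decomposition, i.e.\ so that along this set the differences $a(n,\omega)-a(n-l,T^{l}\omega)$ dominate $(A-\epsilon)l$ for the relevant range of $l$. The proof is a record/density argument on the subadditive sequence. Granted such indices, I would use compactness of $\overline{X}^h$ to extract a subsequential limit $h_{u(n_k,\omega)x}\to h$ pointwise, and the cocycle-nonexpansivity estimate $d(u(l,\omega)x,u(n_k,\omega)x)\le a(n_k-l,T^l\omega)$ gives
\[
-h(u(l,\omega)x)=\lim_{k\to\infty}\bigl[d(x,u(n_k,\omega)x)-d(u(l,\omega)x,u(n_k,\omega)x)\bigr]\ge\liminf_{k\to\infty}\bigl[a(n_k,\omega)-a(n_k-l,T^l\omega)\bigr].
\]
A diagonal procedure across $\epsilon_j\downarrow 0$ then produces a single $h$ for which $\liminf_l(-h(u(l,\omega)x)/l)\ge A$, completing the identity.

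The principal obstacle is this good times lemma together with its almost sure and measurable upgrade. In the isometric case of \cite{KL06} clean inverse identities give a relatively transparent record-type argument; in the general nonexpansive setting of \cite{GK20} these identities fail, and a more delicate analysis tying the subadditive cocycle's asymptotics to its cocycle decomposition is required. A secondary subtlety is that $X$ need not be locally compact, so $\overline{X}^h$ need not be metrizable; the subsequential extractions must therefore be performed pointwise via a countable separating family, and the resulting $h$ must be chosen in an $\omega$-measurable fashion. With these ingredients in place, the rest of the argument is routine.
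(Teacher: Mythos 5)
Your outline matches, at the level of strategy, the proof the paper refers to (the metric compactification $\overline{X}$, the subadditive cocycle $a(n,\omega)=d(x,u(n,\omega)x)$, Kingman for the right-hand side, the $\limsup$ bound coming from the $1$-Lipschitz property of metric functionals with $h(x)=0$, and the extraction of $h$ as a weak sub-limit of $h_{u(n_i,\omega)x}$ along a good subsequence). The subadditivity check, the cocycle estimate $d(u(l,\omega)x,u(n_k,\omega)x)\le a(n_k-l,T^l\omega)$, and the resulting lower bound for $-h(u(l,\omega)x)$ are all correct and are exactly the steps the paper highlights in Section~\ref{sec:Metric-preliminaries}.

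Where your proposal has a genuine gap is in the precise form of the ``good times'' lemma and the diagonal argument you hang on it. You assert it with a fixed $\epsilon$: there are infinitely many $n$ with $a(n,\omega)-a(n-l,T^l\omega)\ge(A-\epsilon)l$ ``for the relevant range of $l$''. If ``relevant range'' means all $1\le l\le n$, the statement is false already for additive cocycles (take $a(n,\omega)=\sum_{k<n}\phi(T^k\omega)$ with $\phi$ taking values $0$ and $2A$ each with probability $1/2$: then $a(n,\omega)-a(n-1,T\omega)=\phi(\omega)$, which for a positive-measure set of $\omega$ is $0<A-\epsilon$ for every $n$). If instead the bound only holds for $K_\epsilon\le l\le n$, then the sub-limit $h_\epsilon$ you extract only satisfies $-h_\epsilon(u(l,\omega)x)\ge(A-\epsilon)l$ for $l\ge K_\epsilon$, and passing to a further sub-limit $h$ of $h_{\epsilon_j}$ as $\epsilon_j\downarrow0$ does not give $\liminf_l(-h(u(l,\omega)x))/l\ge A$, because for any fixed $l$ the bound from $h_{\epsilon_j}$ eventually drops out as $K_{\epsilon_j}\to\infty$; $\liminf$ is not stable under pointwise limits. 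This is precisely the obstruction that forced \cite{KL06} into a skew-product/boundary construction (and an invertibility assumption) and that \cite{GK20} removed by proving a strictly stronger refinement of Kingman's theorem: for a.e.\ $\omega$ there exist a single sequence $n_i\to\infty$ and a sequence $\delta_l\downarrow0$ (independent of $i$) such that $a(n_i,\omega)-a(n_i-l,T^l\omega)\ge(A-\delta_l)l$ for every $i$ and every $1\le l\le n_i$. With this form a single weak extraction along $n_i$ already yields $-h(u(l,\omega)x)\ge(A-\delta_l)l$ for all $l$, hence $\liminf_l(-h(u(l,\omega)x))/l\ge A$, and no diagonal over $\epsilon$ is needed. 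In short: the decisive input is not a fixed-$\epsilon$ record lemma plus a diagonal, but the $\delta_l\to0$ version of the subadditive lemma, and your proposal would need to be repaired to state and use it in that form.
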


\begin{figure}
\includegraphics[scale=0.4]{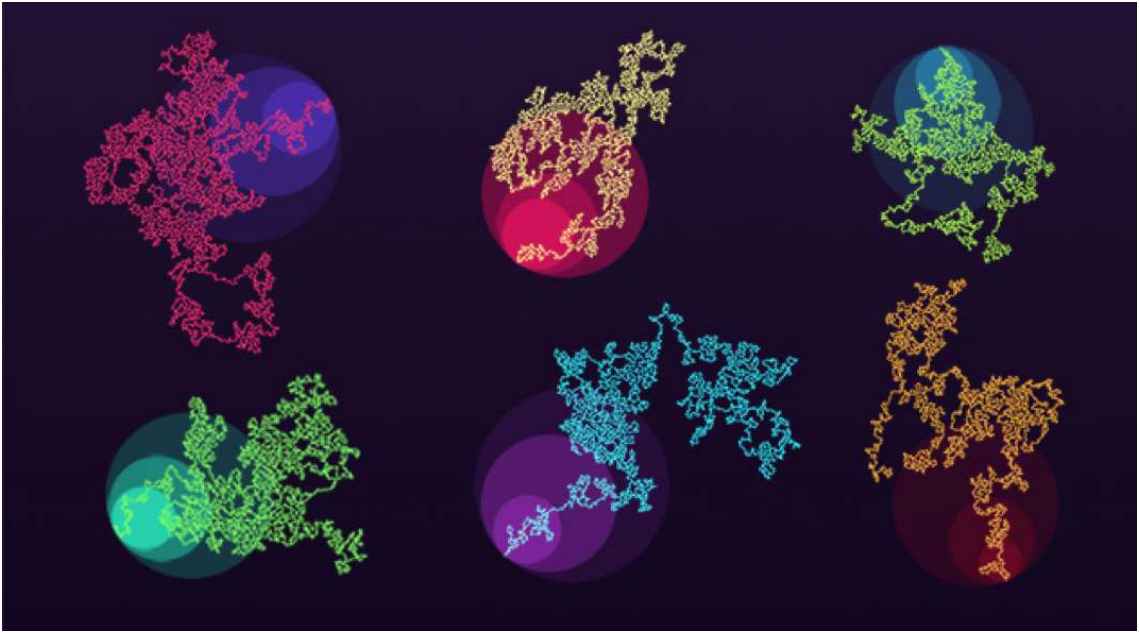}\caption{Illustration of the convergence in direction of $u(n,\omega)x$ as
$n\rightarrow\infty$ in six experiments in a hyperbolic disk. The
colored disks are horodisks that the random walks trajectories go
deeper and deeper into with time. (Image by Cécile Bucher.)}
\end{figure}
Metric functionals are almost what is usually called horofunctions,
see section \ref{sec:Metric-preliminaries}. This theorem when specialized
to $X$ a symmetric space of nonpositive curvature, Gromov hyperbolic
space, or CAT(0) space, recovers some previous results mentioned above
by Oseledets, Furstenberg, Kaimanovich, and Karlsson-Margulis. It
also implies random mean ergodic theorems of Ulam-von Neumann, Kakutani,
and Beck-Schwartz, see \cite{GK20}, and it holds even when the traditional
mean ergodic theorem fails \cite{K21b}. Theorem \ref{thm:ergodic}
furthermore provides generalized laws of large numbers with convex
moments \cite{KMo08}, and has found application to random walks on
groups and bounded harmonic functions on manifolds \cite{KL07,KL07b}
without knowing anything specific about the metric functionals in
these settings. I want to emphasize that Theorem \ref{thm:ergodic}
applies in particular to every random walks with finite first moment
on \emph{any} finitely generated group.

Moreover, using Theorem \ref{thm:ergodic}, Horbez, building on the
approach of \cite{K14}, could establish the following random extension
of Thurston's theorem: 
\begin{thm}
\emph{(\label{thm:(Random-spectral-theorem}Random spectral theorem
of surface homeomorphisms \cite{K14,H16})} Let $v(n,\omega)=A(t^{n-1}\omega)...A(T\omega)A(\omega)$
be an integrable i.i.d random product of homeomorphisms of a closed
surface $\Sigma$ of genus $g\geq2$. Then there is a (random) filtration
of subsurfaces $Y_{1}\subset Y_{2}\subset...\subset Y_{k}=\Sigma$
and (deterministic) exponents $\lambda_{1}<\lambda_{2}<..<\lambda_{k}$
such that
\[
\lim_{n\rightarrow\infty}\frac{1}{n}\log l_{\rho}(v(n,\omega)c)=\lambda_{i}
\]
whenever the simple closed curve $c$ can be isotoped to a curve contained
in $Y_{i}$ but not in $Y_{i-1}$. Here $l_{\rho}$ is the minimal
length in the isotopy class in some fixed Riemannian metric $\rho.$ 
\end{thm}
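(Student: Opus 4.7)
The natural plan is to lift the random product $v(n,\omega)$ to an isometric cocycle on Teichmuller space $\mathcal{T}(\Sigma)$ with its Teichmuller metric $d_\mathcal{T}$ and then feed this cocycle into Theorem \ref{thm:ergodic}. Since the action of $\mathrm{Homeo}(\Sigma)$ on isotopy classes of simple closed curves factors through the mapping class group, and since the latter acts by $d_\mathcal{T}$-isometries, $v(n,\omega)$ descends to an ergodic cocycle of isometries of $(\mathcal{T}(\Sigma),d_\mathcal{T})$. Integrability on $\mathcal{T}(\Sigma)$ is a routine consequence of the $\log$-integrability of the distortion that $A(\omega)$ inflicts on a finite marking of curves, since $d_\mathcal{T}(x,\phi\cdot x)$ is bounded by a linear function of the maximal log-distortion of extremal lengths of a filling set of curves. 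Theorem \ref{thm:ergodic} then yields, for a.e.\ $\omega$, a metric functional $h_\omega$ on $\mathcal{T}(\Sigma)$ and a deterministic constant $\lambda_k\geq 0$ with
\[
\lim_{n\to\infty}-\frac{1}{n}h_\omega\bigl(v(n,\omega)x\bigr)\;=\;\lim_{n\to\infty}\frac{1}{n}d_\mathcal{T}\bigl(x,v(n,\omega)x\bigr)\;=\;\lambda_k.
\]

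The next step is to translate this metric information back to surface curves. Following the description of the Gardiner--Masur boundary by Walsh, Miyachi and Liu--Su, every metric functional on $(\mathcal{T}(\Sigma),d_\mathcal{T})$ is represented (up to an additive constant) by a projective measured foliation $\mathcal{F}_\omega$ on $\Sigma$, with $-h_\omega(y)$ comparable in the asymptotic regime to $\log i(\cdot,\mathcal{F}_\omega)$-type functionals. Combining this with the standard fact that extremal length and any fixed Riemannian length $l_\rho$ are comparable on each isotopy class (with multiplicative constants uniform along sequences in $\mathcal{T}$) converts the displayed limit into the length estimate
\[
\lim_{n\to\infty}\frac{1}{n}\log l_\rho\bigl(v(n,\omega)c\bigr)\;=\;\lambda_k\qquad\text{whenever}\quad i(c,\mathcal{F}_\omega)>0.
\]
The simple closed curves $c$ on which $\mathcal{F}_\omega$ has zero intersection are exactly those that can be isotoped into the essential subsurface $Y_{k-1}:=\Sigma\setminus N(\mathrm{supp}\,\mathcal{F}_\omega)$ (a regular neighborhood of the support removed), giving the top stratum of the filtration and the top exponent.

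To build the remaining strata $Y_{k-2}\subset\cdots\subset Y_1$ and exponents $\lambda_{k-1}>\cdots>\lambda_1$, I would induct on the complexity $\xi=3g-3+b$. Because $\mathcal{F}_\omega$ is asymptotically preserved (projectively) by the random trajectory, its complementary subsurface $Y_{k-1}$ is $v(n,\omega)$-invariant up to isotopy, and after a straightening/surgery procedure the cocycle restricts to an ergodic cocycle of homeomorphisms of $Y_{k-1}$ (equivalently of each of its components). Applying Theorem \ref{thm:ergodic} on $\mathcal{T}(Y_{k-1})$ (a product over components, if necessary) produces the next subsurface $Y_{k-2}$ and the next deterministic exponent $\lambda_{k-1}<\lambda_k$, and iterating this construction terminates in finitely many steps since $\xi$ strictly drops each time. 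Determinism of each $\lambda_i$ is immediate from ergodicity of $T$.

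The main obstacle, in my view, is the inductive step. Restricting a homeomorphism of $\Sigma$ to a subsurface requires a canonical straightening of the boundary curves and is only well-defined in $\mathrm{MCG}(Y_{k-1})$ up to Dehn twists along boundary components; one must set this up so that the induced map $\omega\mapsto A_{Y_{k-1}}(\omega)$ is still $T$-equivariantly measurable and satisfies the integrability hypothesis of Theorem \ref{thm:ergodic}. A subsidiary technical point is promoting the asymptotic information about $h_\omega$ --- which a priori only controls a supremum of log-ratios of extremal lengths --- to the precise logarithmic asymptotics of the individual length $l_\rho(v(n,\omega)c)$ for each fixed $c$; this relies on the compactness of the Thurston boundary and on quantitative comparisons between hyperbolic, extremal and intersection-based length along unbounded sequences in $\mathcal{T}(\Sigma)$.
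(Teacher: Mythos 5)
Your plan diverges from the route taken in \cite{K14,H16} on both of its two main steps, and each divergence introduces a real gap.

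First, the choice of metric. The proofs referenced by the paper deliberately work with Thurston's asymmetric (Lipschitz) metric
\[
d_{\mathrm{Th}}(x,y)=\log\sup_{\alpha}\frac{l_{y}(\alpha)}{l_{x}(\alpha)},
\]
whose definition is already a statement about ratios of curve lengths; this makes the passage from the metric cocycle $d_{\mathrm{Th}}(x,v(n,\omega)x)$ and the metric functional supplied by Theorem \ref{thm:ergodic} to the asymptotics of $\log l_{\rho}(v(n,\omega)c)$ nearly tautological, once one knows Walsh's determination of the horofunctions of this metric (\cite{W14}, which inspired \cite{K14}). You instead use the Teichm\"uller metric, and here two of your asserted facts are false as stated. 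The Gardiner--Masur horoboundary of $(\mathcal{T}(\Sigma),d_{\mathcal{T}})$ strictly contains the space of projective measured foliations, so not every metric functional is represented by some $\mathcal{F}_{\omega}$; and the claim that extremal length and a fixed Riemannian length $l_{\rho}$ are ``comparable with multiplicative constants uniform along sequences in $\mathcal{T}$'' is not true --- their relationship degenerates as one leaves every compact set of moduli space, which is exactly the regime relevant here. So the translation from the metric conclusion of Theorem \ref{thm:ergodic} to the curve-length statement does not go through in the form you give it.

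Second, the lower exponents. You propose an induction by restricting the cocycle to the complementary subsurface $Y_{k-1}$, and you correctly flag this as the main obstacle, but the difficulty is not merely technical book-keeping: there is no canonical way to induce a measurable, integrable ergodic cocycle of homeomorphisms of $Y_{k-1}$ from the original one, and nothing in your argument produces it. The proof in \cite{H16} (for outer space, and by the same method for surfaces) does something structurally different: after extracting the top exponent from the metric functional, it studies the set of $\mu$-stationary measures on the boundary, in the spirit of Furstenberg--Kifer and Hennion in the matrix case, and reads off the full list of exponents and the filtration from those measures. This sidesteps the subsurface-restriction problem entirely, and it is also where the i.i.d.\ hypothesis (as opposed to a general ergodic cocycle) is genuinely used. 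Your inductive scheme, even if repaired, would also need an argument that the topological type of the random subsurface is a.s.\ constant and that the induced Lyapunov exponent is strictly smaller than $\lambda_{k}$; neither is addressed.

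In short: switching to the Teichm\"uller metric costs you both the clean identification of boundary points with measured foliations and the direct link to curve lengths, and the subsurface-restriction induction is not a viable substitute for the stationary-measure analysis that the cited proof actually uses.
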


The exponents $\lambda_{i}$ are a type of generalized Lyapunov exponents
that perhaps could be called \emph{topological Lyapunov exponents
for surface homeomorphisms.} A different approach was provided in
\cite{K18} which showed how to get the top exponent for ergodic cocylces
of homeomorphisms, using the metric ideas and a lemma in \cite{KM99}
combining it with results in \cite{LRT12}. In both cases, the proofs
use Thurston's asymmetric metric. A prior study of random walks on
the mapping class groups was carried out in \cite{KM96}, which showed
in particular that under a non-elementary assumption the random walk
converges to uniquely ergodic foliations, in which case it follows
from \cite{K14} that there is only one exponent.

Actually the main results of Horbez' paper \cite{H16} concern instead
random walks on the outer automorpshism group of free groups, giving
a result very similar to Theorem \ref{thm:(Random-spectral-theorem}.
In order not to have to explain notations from the important subject
of automorphisms of free groups, I will not state it here. The proof
goes via a determination of the metric functionals of the outer space
and an application of Theorem \ref{thm:ergodic}. To get all generalized
Lyapunov exponents Horbez then studies the set of stationary measures
on the boundary of outer space in parallel to works in the matrix
case of Furstenberg-Kifer and Hennion.

We thus see three settings, linear transformations in finite dimensions,
surface homeomorphisms, and automorphisms of free groups, that are
not merely analogous but the corresponding ``law of large numbers''
can be deduced ultimately from the same theorem. The strategy is:
\begin{itemize}
\item Instead of looking at the underlying space where the linear maps,
homeomorphisms, group automorphisms etc act, we lift the action to
a more abstract space, a\emph{ moduli space} as it were, of positive
structures on the corresponding underlying space. 
\item On that associated space there is often an invariant metric.
\item Employ the noncommutative ergodic theorem in terms of metric functionals
and interpret the result as concretely as possible.
\end{itemize}
The very last part can in fact often be done, as testified by Theorems
\ref{thm:Osel} and \ref{thm:(Random-spectral-theorem} above which
include no reference to metric functionals (or horofunctions), and
likewise in the last section on deep learning. Sometimes, like in
complex analysis, Cayley graphs, or maps of cones there is no need
to pass to an auxiliary space in order to find an invariant metric
for the transformations in question.

Further generalized Lyapunov exponents could perhaps also be defined
for higher dimensional diffeomorphisms, using their isometric action
on Ebin's space of Riemannian metrics on a fixed compact manifold
\cite{Eb68} and an investigation of the metric functionals. Some
progress and possibilities are pointed out in section \ref{sec:Diffeomorphisms}.

A different direction, using some of the arguments in \cite{KL06},
was developed by Masai \cite{Ma21}, namely for surface bundles over
a circle, he showed that for pseudo-Anosov maps the translation length
(=``top Lyapunov exponent'' in the terminology of the present paper)
in a certain weak metric equals the 3-dimensional hyperbolic volume
of the mapping torus that the map defines.

\paragraph*{Ackonwledgement: }

This text was written in connection with the conference ``New Trends
in Lyapunov Exponents'' in Lisbon 2022. I thank the organizers and
especially Pedro Duarte for the invitation to this very pleasant and
stimulating week. I also thank Alex Blumenthal for helpful discussions
related to the topics of this paper during this meeting. 

\section{Deep learning}

Deep learning\textbf{ }provided Artificial Intelligence (AI) with
a long sought-after new tool that moreover exceeded all expectation,
as was realized starting from around 2012. The development of deep
neural network had begun much earlier and Bengio, Hinton, and LeCun
received the 2018 Turing award for these methods. Part of the prize
citation stated ``By dramatically improving the ability of computers
to make sense of the world, deep neural networks are changing not
just the field of computing, but nearly every field of science and
human endeavor.'' 

The remarkable success of these methods indicates that real-life data
tend to have a compositional structure. More precisely, given a learning
task, one seeks maps $g_{1},g_{2}...g_{n}$ such that their composition
\[
u(n):=g_{1}g_{2}...g_{n}
\]
applied to the input data should be close to the desired output (possibly
after applying a certain \emph{decision function }$f$). The \emph{depth}
$n$ can be several hundred. The maps are often of the form $g_{i}(x)=\sigma(W_{i}x+b_{i})$
where $\sigma$ is a fixed nonlinear function, called \emph{activation
function}, applied componentwise, $W_{i}$ is a $d\times d$ matrix,
called \emph{weights}, and $b_{i}$ is a vector in $\mathbb{R}^{d}$,
called \emph{bias vector}. The dimension $d$ is called the \emph{width.}
The nonlinearity, inspired by our brains (\cite{MP43,Ro58}), is crucial
(for one thing, the composition of affine maps are again affine, and
likewise the composition of polynomials is again a polynomial). Some
standard choices are, sigmoid/logistic function ($1/(1+e^{-t})$),
TanH ($\tanh(t)$), and ReLU (Rectified Linear Unit, $\sigma(t)=\max\{0,t\}$),
with different features and advantages. ReLU has been observed to
work particularly well, generally better than smooth functions. 

In practice, the weights and biases in the neural network are first
randomly selected (\emph{initialization}) and then optimized by stochastic
gradient descent on a chosen loss function specific to the task (\emph{training}).

\begin{figure}
\includegraphics[scale=0.5]{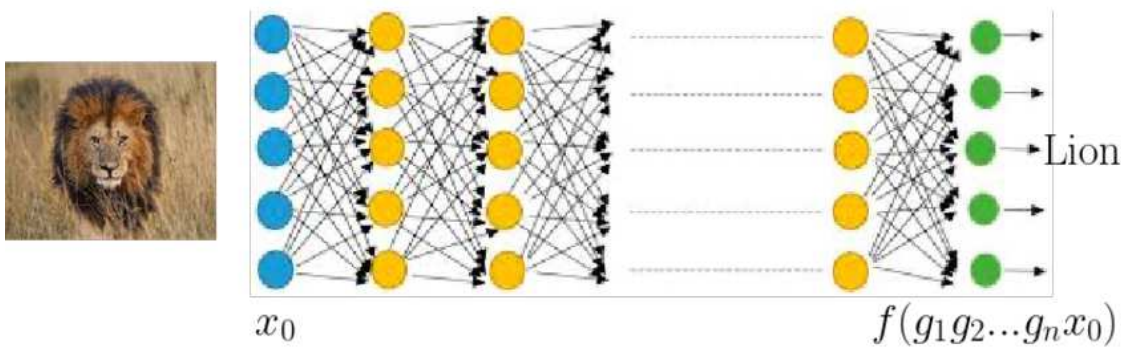}\caption{A deep neural network}
\end{figure}

Much of the subject of deep learning consists of empirical observations,
there is no or little theoretical understanding, as remarked by many
authors. According to \cite{Se20} ``A mathematical theory of deep
learning would illuminate how they function, allow us to assess the
strengths and weaknesses of different network architectures, and lead
to major improvements.'' Which type of layer maps to take, which
$\sigma$, how many layers $n$, how many nodes $d$ in each layer,
how to best find the parameters, how stable the solution is under
random perturbation (such as the \emph{drop-out procedure}) are some
of the questions of important practical concern. The need for a theoretical
understanding, instead of relying on black-box techniques, is also
expressed by practitioners, this lack of theory hinders their work.

One of the remarkable features that is not understood, is why deep
neural networks generally seem to mostly avoid the problem of overfitting
which is a phenomenon in traditional statistics. The latter typically
happens when approximating some data with a polynomial of very high
degree, the curves go through all the sample or training data, but
inbetween these points of perfect fit it can fluctuate wildly, related
to the Runge phenomenon. This is clearly undesirable. 

There are several ways random products $u(n):=g_{1}g_{2}...g_{n}$
of noncommuting nonlinear maps appear in deep learning:
\begin{enumerate}
\item \emph{Random initialization} see \cite{NBYS22} for a review
\item \emph{Drop-out regularization }which in particular is used to verify
robustness of the obtained error minimizer, and also a way training
the network \cite{SHK14}
\item \emph{Bayesian learning }\cite{Ne12}
\item Learning that combines taking some maps $g_{i}$ at random and optimize
the remaining one, a procedure with apparently good performance that
speeds up the training significantly \cite{BT22}
\end{enumerate}
The first two concepts are so fundamental in the current state-of-the-art
that one encounters them after any couple of first lectures on deep
learning. Hanin wrote in \cite{Ha21} ``Beyond illuminating the properties
of networks at the start of training, the analysis of random neural
networks can reveal a great deal about networks after training as
well.''

Moreover, as Avelin pointed out to me, not only the initialization
but also the training (stochastic gradient descent) actually involves
a random product of transformations. Thus we see compositional product
of random operations appearing in several ways in deep learning. Since
the number of layer maps can approach a thousand, it should make limit
theorems as discussed in this article very relevant, as will be discussed
in the last section.

\section{Elements of a metric functional analysis\label{sec:Metric-preliminaries}}

A metric space is a set equipped with a distance function $d(x,y)$
that is semi-positive, symmetric and satisfy the triangle inequality.
The author argued in \cite{K21b} that it is useful to develop parts
of metric geometry in analogy with linear functional analysis. 

Sometimes various ways of weakening the notion of a metric are useful:
pseudo-metrics arise naturally in complex analysis and here we will
also allow for asymmetric metrics (as in Thurston's metric). Moreover
we will let $d$ possibly to take negative values, useful for topical
maps. In other words, we consider weak metrics as defined in the introduction.
Weak metrics (but taking only nonnegative values) were in fact already
of interests to people like Heinz Hopf in the 1940s, see \cite{Ri43}
pointed out in \cite{PT09}.

Note that a symmetrization such as $D(x,y):=\max\left\{ d(x,y),d(y,x)\right\} $
(or the sum) is nonnegative following from
\[
0=d(x,x)\leq d(x,y)+d(y,x).
\]
 With the pseudo-metric $D$ one defines a topology on $X$. In case
the separation axiom holds, i.e. that $d(x,y)=0$ implies $x=y$,
then $D$ is a genuine metric.

A map $f:X\rightarrow Y$ is \emph{nonexpansive }if
\[
d_{Y}(f(x_{1}),f(x_{2}))\leq d_{X}(x_{1},x_{2})
\]
for all $x_{1},x_{2}\in X$. Note that compositions of nonexpansive
maps remain nonexpansive. Moreover, note that if we pass to a symmetrization
of a weak metric, then $f$ remains nonexpansive.

Let $(X,d)$ be a weak metric space. We will now define the \emph{metric
compactification }of $X$, which will provide a weak topology with
compactness properties in the metric setting. (I learnt from Cormac
Walsh that this construction works without essential changes to asymmetric
metrics, see \cite{AGW09} and \cite{W14} which inspired \cite{K14},
and see also the more recent paper \cite{GuW22}.) 

Let $F(X,\mathbb{R})$ be the space of continuous functions $X\rightarrow\mathbb{R}$
equipped with the topology of pointwise convergence. Given a base
point $x_{0}$ of the metric space $X$, let 
\[
\Phi:X\rightarrow\mathrm{F}(X,\mathbb{R})
\]
be defined via
\[
x\mapsto h_{x}(\cdot):=d(\cdot,x)-d(x_{0},x).
\]

\begin{prop}
The map $\Phi$ is a well-defined continuous map, and it is injective
if $d$ separates points. The closure $\overline{\Phi(X)}$ is compact.
\end{prop}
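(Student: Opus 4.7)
First I would establish that each $h_x$ is continuous on $X$ and that $\Phi$ itself is continuous into $F(X,\mathbb{R})$ with the topology of pointwise convergence. Both properties rest on the triangle-inequality estimate $|d(y,x)-d(y',x)|\leq D(y,y')$, where $D(y,y'):=\max\{d(y,y'),d(y',y)\}$ is the nonnegative symmetrization of $d$ that defines the topology on $X$; this bound follows at once from the two weak-metric inequalities $d(y,x)\leq d(y,y')+d(y',x)$ and $d(y',x)\leq d(y',y)+d(y,x)$. Applied in the first variable, it shows $h_x$ is $1$-Lipschitz with respect to $D$, hence continuous, so $h_x\in F(X,\mathbb{R})$. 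Applied in the second variable, together with the analogous inequality for $d(x_0,\cdot)$, it shows $x\mapsto h_x(y)$ is continuous for each fixed $y$, which is exactly continuity of $\Phi$ into the pointwise-convergence topology.

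For injectivity when $d$ separates points, my plan is to simply test the identity $h_x=h_{x'}$ at the two points $y=x$ and $y=x'$. This gives $d(x,x')=d(x_0,x')-d(x_0,x)$ and $d(x',x)=d(x_0,x)-d(x_0,x')$, which add to $d(x,x')+d(x',x)=0$. Combined with the universal bound $0=d(x,x)\leq d(x,x')+d(x',x)$, this forces the symmetrization $D(x,x')$ to vanish, and the separation hypothesis then yields $x=x'$.

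The bulk of the work is the compactness of $\overline{\Phi(X)}$, which I would handle by a Tychonoff argument paired with a uniform Lipschitz bound. A further triangle application yields the pointwise bound $|h_x(y)|\leq D(y,x_0)$ for every $x,y\in X$, so $\Phi(X)$ is contained in the compact product $K:=\prod_{y\in X}[-D(y,x_0),\,D(y,x_0)]$. Hence the closure of $\Phi(X)$ in $\mathbb{R}^X$ with the product topology is compact. The step I expect to require the most care is the observation that this closure actually lies inside $F(X,\mathbb{R})$: every $h_x$ is $1$-Lipschitz with respect to $D$ with a uniform modulus of continuity, and $1$-Lipschitz is preserved under pointwise limits, so any function in $\overline{\Phi(X)}$ is itself $1$-Lipschitz in $D$ and therefore continuous. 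Consequently the closure in $F(X,\mathbb{R})$ coincides with the closure in $\mathbb{R}^X$ and is compact, completing the proof.
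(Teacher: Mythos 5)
Your proofs of continuity (of each $h_x$ and of $\Phi$) and of compactness via Tychonoff track the paper's argument closely and are correct; you are in fact slightly more careful than the paper in one respect, noting explicitly that the uniform $1$-Lipschitz bound in $D$ persists under pointwise limits, so that the closure of $\Phi(X)$ taken inside the compact product $\prod_{y\in X}[-D(y,x_0),D(y,x_0)]\subset\mathbb{R}^X$ already consists of continuous functions and hence agrees with the closure in $F(X,\mathbb{R})$; the paper only remarks that ``this inequality clearly passes to the closure.'' (Your pointwise bound $|h_x(y)|\le D(y,x_0)$ is a bit more conservative than the paper's $-d(x_0,y)\le h_x(y)\le d(y,x_0)$, but the conclusion is the same.)

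Your injectivity argument, however, has a genuine gap at the final step. From $h_x=h_{x'}$ you correctly deduce $d(x,x')=-d(x',x)$, so the sum $d(x,x')+d(x',x)$ vanishes. But combining this with the universal bound $d(x,x')+d(x',x)\ge 0$ tells you only that the sum is exactly zero; it does \emph{not} force $D(x,x')=\max\{d(x,x'),d(x',x)\}$ to vanish. In the generality the paper allows, where a weak metric may take negative values, the two summands can cancel without each being zero: if $d(x,x')=c>0$ and $d(x',x)=-c$, the sum is zero while $D(x,x')=c>0$. Your inference would close under the extra hypothesis $d\ge 0$, which makes both summands nonnegative and hence zero. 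The paper's own argument runs differently, via a case analysis on the signs of $d(x,y)$ and $d(y,x)$ after normalizing $d(x_0,x)\ge d(x_0,y)$. Be aware, though, that the obstruction you hit is not an artifact of your method: on $X=\mathbb{R}$ with the weak metric $d(x,y)=y-x$, which satisfies $d(x,x)=0$, the triangle inequality (with equality), and separates points in the paper's sense, one finds $h_x(y)=x_0-y$ for every $x$, so $\Phi$ is constant and not injective. Some hypothesis beyond the bare separation axiom, most naturally $d\ge 0$, seems to be required for this clause of the proposition, and you should make that assumption explicit in your write-up.
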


\begin{proof}
By the triangle inequality
\[
h_{x}(y)-h_{x}(z)=d(y,x)-d(z,x)\leq d(y,z)
\]
\[
-h_{x}(y)+h_{x}(z)=-d(y,x)+d(z,x)\leq d(z,y)
\]

therefore
\[
\left|h_{x}(y)-h_{x}(z)\right|\leq\max\left\{ d(y,z),d(z,y)\right\} ,
\]
which in particular implies that $h_{x}$ is continuous. This inequality
clearly passes to the closure. The map $\Phi$ is continuous since
\[
\left|h_{x}(z)-h_{y}(z)\right|=\left|d(z,x)-d(x_{0},x)-d(z,y)+d(x_{0},y)\right|\leq
\]
\[
2\max\left\{ d(x,y),d(y,x)\right\} 
\]
by the usual triangle inequality and the one in $X$.

Suppose that $d$ separates points, then given two points $x$ and
$y$, assume that $d(x_{0},x)\geq d(x_{0},y)$. If $d(x,y)>0$, then
\[
h_{x}(x)-h_{y}(x)=-d(x_{0},x)-d(x,y)+d(x_{0},y)\leq-d(x,y)<0
\]
shows that the two functions are different. In case $d(y,x)>0$, then
\[
h_{y}(x)-h_{x}(x)=d(y,x)-d(x_{0},y)+d(x_{0},x)\geq d(y,x)>0.
\]
These two cases cover all possibilities in view of the remark above
about $D$, and proves the injectivity.

Finally note that by the triangle inequality
\[
-d(x_{0},y)\leq h_{x}(y)\leq d(y,x_{0}).
\]
In view of this and the topology of pointwise topology which is the
product topology, the Tychonov theorem implies that $\overline{\Phi(X)}$
is compact.
\end{proof}
This proposition is the metric space analog of the Banach\textendash Alaoglu\emph{
}theorem. We call $\overline{X}:=\overline{\Phi(X)}$ the \emph{metric
compactification of }$X$ and its elements \emph{metric functionals,
}which recently has been described concretely in a variety of metric
spaces. This development is in parallel to the determination of dual
spaces in the beginning of functional analysis a century ago. I reserve
the more commonly used word \emph{horofunction} for limits in the
topology of uniform convergence on bounded sets (Gromov's choice of
topology in \cite{Gr81} considering genuine metric spaces) of $h_{x_{n}}$
for sequences $x_{n}$ such that $d(x_{0},x_{n})\rightarrow\infty$.

\textbf{A note on the proof of Theorem \ref{thm:ergodic} in the weak
metric case:} while \cite{KL06} considered a skew-product extension
to the boundary, the \cite{GK20} paper established a new substantial
refinement of the subadditive ergodic theorem \cite{Ki68}, which
we feel is a refinement of the fundamental theorem of Kingman that
has potential for further use. It has indeed already found independent
dynamical applications in \cite{KS19,CD20,ZC21}. In his book \cite{Sz01}
computer scientist Szpankowski explains why subadditivity and the
subadditive ergodic theorem are fundamental for the analysis of algorithms.

As observed in \cite{K14} the noncommutative ergodic theorem works
with an asymmetric metric $d$. Here is the verification that it works
even for weak metrics. First, it is of importance that 
\[
a(n,\omega):=d(x_{0},u(n,\omega)x_{0})
\]
is a subadditive cocycle. This is verified as follows: 
\[
d(x_{0},u(n+m,\omega)x_{0})\leq d(x_{0},u(n,\omega)x_{0})+d(u(n,\omega)x_{0},u(n+m,\omega)x_{0})
\]
\[
\leq d(x_{0},u(n,\omega)x_{0})+d(x_{0},u(m,T^{n}\omega)x_{0}).
\]
Kingman's subadditive ergodic theorem then asserts that 
\[
\lim_{n\rightarrow\infty}\frac{1}{n}d(x,u(n,\omega)x)
\]
exists a.e. under the integrability condition
\[
\int_{\Omega}\left|d(x_{0},u(1,\omega)x_{0})\right|d\mu<\infty.
\]
Note that nothing here depends on the choice of $x_{0}$ including
the value of the limit therefore written with a general point $x$.
And if we assume that $(\Omega,\mu,T)$ is an ergodic system then
this ``top Lyapunov'' exponent is deterministic, i.e. essentially
constant in $\omega$.

The proof of Theorem \ref{thm:ergodic} in the weak metric setting
now follows exactly \cite[section 3]{GK20} except that in that reference
the order in which the metric is written is reversed. 

\textbf{A possible future direction: }It seems plausible that often
the limits 
\[
\lim_{n\rightarrow\infty}\frac{1}{n}h(u(n,\omega)x_{0})
\]
exist a.e. for any metric functional $h$. Evidence and discussion
of this appear in some of my earlier papers, for example \cite{K04}
(this reference also contains an argument why Theorem \ref{thm:ergodic}
in the special case of CAT(0)-spaces is equivalent to geodesic ray
approximation \cite{Ka00,KM99}). From ray approximation (being of
sublinear distance to a geodesic ray) and purely geometric reasons
(any two geodesic rays have a well-defined linear rate of asymptotic
divergence) all the above limits exist for proper CAT(0)-spaces and
Gromov hyperbolic spaces. See also \cite{Sa21} for a recent contribution
to this topic. 

\section{Multiplicative ergodic theorems for linear operators}

The need for multiplicative ergodic theorems for operators in infinite
dimensions has been expressed in the influential articles \cite{Ru82,ER85,LY12}.
In one approach to the 2D Navier-Stokes equation and related evolution
equations, the dynamics takes place in infinite dimensional Hilbert
spaces. There has been an increasing interest in results on this topics,
for example \cite{LL10,GTQ15,Bl16,MN20,BHL20}. González-Tokman wrote
in \cite{GT18} that ``An important motivation behind the recent
work on multiplicative ergodic theorems is the desire to develop a
mathematical theory which is useful for the study of global transport
properties of real world dynamical systems, such as oceanic and atmospheric
flows. Global features of the ocean flow include large scale structures
which are important for the global climate.'' In a different direction,
also leading to multiplicative ergodic theorems in Banach spaces is
\cite{CDS09}, which deals with difference equations with random delays.
Such delays are common in models of biological systems, immune response,
epidemiology, and economics (see \cite{CDS09} for references). 

Given a bounded linear operator $A$, submultiplicativity implies
that 
\[
\lim_{n\rightarrow\infty}\left\Vert A^{n}\right\Vert ^{1/n}
\]

exists and equals the spectral radius. On the other hand expressions
\[
\left\Vert A^{n}v\right\Vert ^{1/n}
\]
as $n\rightarrow\infty$ may not converge in infinite dimensions,
as is well known, see for example the introduction of \cite{Sc06}.
This puts a limitation on the validity of Oseledets theroem for operators.
Kingman's theorem takes care of the regularity of the growth of the
norm, and Theorem \ref{thm:ergodic} may be the appropriate replacement
for the second type of more directional behavior (local spectral theory).

The first infinite dimensional extension of Theorem \ref{thm:Osel}
is Ruelle's theorem \cite{Ru82} for compact operators, and other
early results were shown by Mañe and Thieullen \cite{Th87}. A strengthening
of this for the Hilbert-Schmidt class was obtained in \cite{KM99}: 
\begin{thm}
\emph{(\cite{KM99})} Let $u(n,\omega)$ be an ergodic cocycle of
$Id+A$ operators where $A$ is Hilbert-Schmidt. Then there is a.s.
an operator $\Lambda_{\omega}$ such that 
\[
\frac{1}{n}\left(\sum_{i}(\log\mu_{i}(n))^{2}\right)^{1/2}\rightarrow0
\]
as $n\rightarrow\infty$, where $\mu_{i}(n)$ are the eigenvalues
of the positive part of $\Lambda_{\omega}^{-n}u(n,\omega)$. 
\end{thm}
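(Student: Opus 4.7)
The plan is to realize the cocycle as acting by isometries on an infinite-dimensional Riemannian symmetric space of nonpositive curvature, and to extract $\Lambda_{\omega}$ from an asymptotic geodesic ray supplied by the CAT(0) specialization of the noncommutative ergodic theorem.

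First I would set up the geometry. Let $\mathcal{P}$ denote the space of positive invertible operators $P$ on the ambient Hilbert space for which $\log P$ is Hilbert--Schmidt, equipped with the Riemannian distance
$$d(P,Q):=\bigl\|\log(P^{-1/2}QP^{-1/2})\bigr\|_{HS}.$$
This is the infinite-dimensional analog of $GL(n,\mathbb{R})/O(n)$, and on the Hilbert--Schmidt stratum it forms a complete, simply connected CAT(0) space in which the geodesic ray emanating from $Id$ with self-adjoint unit initial tangent $H$ is $t\mapsto e^{tH}$. The group of invertible $Id+A$ with $A$ Hilbert--Schmidt acts by isometries via $g\cdot P:=gPg^{*}$. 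With base point $P_{0}:=Id$ the displacement $d(P_{0},g\cdot P_{0})=\|\log(gg^{*})^{1/2}\|_{HS}$ is controlled by $\|A\|_{HS}$, so integrability of the cocycle transfers to the integrability hypothesis required on $\mathcal{P}$.

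Next I would invoke Theorem \ref{thm:ergodic} in its CAT(0) guise, which furnishes a.s.\ a geodesic ray $\gamma_{\omega}:[0,\infty)\to\mathcal{P}$ based at $P_{0}$ such that
$$\tfrac{1}{n}\,d\bigl(u(n,\omega)\cdot P_{0},\,\gamma_{\omega}(\lambda n)\bigr)\longrightarrow 0,$$
where $\lambda$ is the deterministic top Lyapunov exponent. Writing $\gamma_{\omega}(t)=e^{tH_{\omega}}$ and setting $\Lambda_{\omega}:=e^{\lambda H_{\omega}/2}$, the identity $\Lambda_{\omega}^{n}\cdot P_{0}=\Lambda_{\omega}^{2n}=\gamma_{\omega}(\lambda n)$ yields the candidate $\Lambda_{\omega}$, a positive operator with $\log\Lambda_{\omega}$ Hilbert--Schmidt. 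The eigenvalue conclusion is then algebraic: putting $T_{n}:=\Lambda_{\omega}^{-n}u(n,\omega)$ and using the isometry of the action,
$$d\bigl(u(n,\omega)\cdot P_{0},\,\Lambda_{\omega}^{n}\cdot P_{0}\bigr)=d(P_{0},T_{n}T_{n}^{*})=\bigl\|\log(T_{n}T_{n}^{*})\bigr\|_{HS},$$
and the eigenvalues of $T_{n}T_{n}^{*}$ are the squares of the $\mu_{i}(n)$, so this quantity equals $2\bigl(\sum_{i}(\log\mu_{i}(n))^{2}\bigr)^{1/2}$. Dividing by $n$ gives the claim.

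The main obstacle I expect is the geometric infrastructure in the first step: verifying that $\mathcal{P}$ with the Hilbert--Schmidt Riemannian structure is a genuine complete CAT(0) space with the claimed family of exponential geodesic rays, and that Theorem \ref{thm:ergodic} applies in this non-locally-compact setting (this is essentially the content of the geometric part of \cite{KM99}). One also has to check that the ray $\gamma_\omega$ can be chosen measurably in $\omega$ so that $\Lambda_\omega$ is bona fide random. Once that infinite-dimensional nonpositive-curvature machinery is in place, the proof reduces to the geodesic-ray conclusion and the polar-decomposition dictionary above.
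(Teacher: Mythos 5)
The paper does not reproduce a proof of this theorem; it is stated as a citation to \cite{KM99}. Your proposal correctly reconstructs the standard Karlsson--Margulis argument: realize the cocycle as an isometric action $g\cdot P = gPg^{*}$ on the infinite-dimensional symmetric space $\mathcal{P}$ of positive operators with Hilbert--Schmidt logarithm under the Riemannian distance $d(P,Q)=\|\log(P^{-1/2}QP^{-1/2})\|_{HS}$; apply the ray-approximation form of the multiplicative ergodic theorem (which is what \cite{KM99} actually proved directly, and which the paper remarks is equivalent to the CAT(0) specialization of Theorem \ref{thm:ergodic}, cf.\ the discussion of \cite{K04}); and translate the sublinear tracking $\tfrac{1}{n}d(u(n,\omega)\cdot Id,\,\Lambda_{\omega}^{2n})\to 0$ into the eigenvalue statement via the polar decomposition.

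Two small points worth tidying. First, the displacement from the base point is $d(Id, g g^{*}) = \|\log(gg^{*})\|_{HS}$, not $\|\log((gg^{*})^{1/2})\|_{HS}$; these differ by a factor $2$. It is harmless for your integrability remark, but it is the same kind of factor that you do get right at the end, where $\|\log(T_{n}T_{n}^{*})\|_{HS}=2\bigl(\sum_{i}(\log\mu_{i}(n))^{2}\bigr)^{1/2}$. Second, you should note the degenerate case $\lambda=0$ explicitly: then $\Lambda_{\omega}=Id$ works and the conclusion reduces to $\tfrac{1}{n}d(Id, u(n,\omega)\cdot Id)\to 0$, which is the Kingman part. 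With $\lambda>0$, the ray $\gamma_{\omega}$ is a genuine unit-speed geodesic $t\mapsto e^{tH_{\omega}}$, and $\Lambda_{\omega}=e^{\lambda H_{\omega}/2}$ gives exactly $\Lambda_{\omega}^{n}\cdot Id = \gamma_{\omega}(\lambda n)$ as you say. Beyond these minor issues the structure of the argument is the right one.
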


The uniformity of the convergence implicit in the conclusion, thanks
to the metric methods, is noteworthy since this is a much stronger
statement in infinite dimensions. In finite dimensions the statement
is equivalent to Oseledets' theorem. This metric approach was recently
substantially extended to a von Neumann algebra setting with a finite
trace in \cite{BHL20}, which used the theorem in \cite{KM99} together
with an intricate analysis, especially of completeness properties,
of a space of positive operators admitting a finite trace to get nonpositive
curvature. 

Let $Pos$ be the space of positive operators on a Hilbert space $H$.
This is a convex cone in the Banach space of symmetric operators,
and it has the corresponding Thompson metric: 
\[
d(p,q)=\sup_{v\in H,\left\Vert v\right\Vert =1}\left|\log\frac{(qv,v)}{(pv,v)}\right|.
\]

Invertible bounded operators $g$ act by isometry on this metric space
via $p\mapsto gpg^{*}$. Unless one restricts to subspaces where there
is a finite trace, this is not a CAT(0) space. On the other hand,
as noted in particular in \cite{CPR94}, the fundamental Segal inequality
\[
\left\Vert \exp(u+v)\right\Vert \leq\left\Vert \exp(u/2)\exp(v)\exp(u/2)\right\Vert 
\]
for symmetric operators $u$ and $v$, can be seen as a weak form
of nonpositive curvature more in Busemann's sense (with respect to
a selection of geodesics). This means that the exponential map $\textrm{exp:Sym\ensuremath{\rightarrow}Pos }$
is distance preserving on lines from $0$ and otherwise distance-increasing. 

In finite dimensions, Lemmens has recently determined $\overline{Pos}$
\cite{L21}. In infinite dimensions, the task to describe this compactification
remains to be done, with some small steps done in \cite{K22} in relation
to the invariant subspace problem.

As for ergodic cocycles of invertible bounded linear operators, one
has from Theorem \ref{thm:ergodic}:
\begin{thm}
\emph{(\cite{GK20}) }Let $v(n,\omega)=A(t^{n-1}\omega)...A(T\omega)A(\omega)$
be an integrable ergodic cocycle of bounded invertible linear operators
of a Hilbert space. Denote the square of the positive part 
\[
[v(n,\omega)]:=v(n,\omega)^{*}v(n,\omega).
\]
Then for a.e. $\omega$ there is a metric functional $h_{\omega}$
on Pos such that
\[
\lim_{n\rightarrow\infty}-\frac{1}{n}h_{\omega}([v(n,\omega)])=\lim_{n\rightarrow\infty}\frac{1}{n}\left\Vert \log[v(n,\omega)]\right\Vert .
\]
\end{thm}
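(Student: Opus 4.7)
The plan is to deduce this statement as a direct application of Theorem \ref{thm:ergodic} to the action of the invertible bounded linear operators on the cone $Pos$, equipped with the Thompson metric as defined in the text. The chosen base point will be the identity operator $I \in Pos$, and the group action will be $g \cdot p := g^{*} p g$, so that $g \cdot I = g^{*}g = [g]$ in the notation of the theorem.

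First, I would verify that this action is by isometries of the Thompson metric. Writing out the definition,
\[
d(g\cdot p, g\cdot q) = \sup_{\|v\|=1} \left| \log \frac{(g^{*}qgv,v)}{(g^{*}pgv,v)} \right| = \sup_{\|v\|=1}\left| \log \frac{(qgv,gv)}{(pgv,gv)} \right|,
\]
and invertibility of $g$ lets us substitute $w = gv/\|gv\|$ to recover $d(p,q)$. Second, I would compute $d(I, [v(n,\omega)])$: since $(pv,v)/(v,v)$ runs over the numerical range, its logarithm on a positive invertible operator $p$ runs between $\log \lambda_{\min}(p)$ and $\log \lambda_{\max}(p)$, so
\[
d(I, [v(n,\omega)]) = \|\log [v(n,\omega)]\|.
\]
Third, under the hypothesis that the cocycle is integrable, $\int \|\log [v(1,\omega)]\|\, d\mu < \infty$, which is exactly the integrability condition for the induced cocycle of Thompson isometries. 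Measurability of $\omega \mapsto u(n,\omega)\cdot I = [v(n,\omega)]$ is clear since positive operators carry a natural Borel structure for which multiplication and adjoint are measurable.

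With these ingredients in place, Theorem \ref{thm:ergodic} applied to the ergodic cocycle $u(n,\omega)$ of Thompson isometries on $(Pos,d)$ with base point $x_0 = I$ produces, for almost every $\omega$, a metric functional $h_\omega$ on $Pos$ such that
\[
\lim_{n\to\infty} -\tfrac{1}{n} h_\omega(u(n,\omega)\cdot I) = \lim_{n\to\infty} \tfrac{1}{n} d(I, u(n,\omega)\cdot I),
\]
which translates, via the two identifications above, into the stated limit involving $\|\log[v(n,\omega)]\|$.

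The main obstacle is ensuring that $(Pos, d)$ really fits within the framework of Theorem \ref{thm:ergodic}: one must check that Thompson's construction yields a weak metric space (noting that $I$ and each $[v(n,\omega)]$ lie in the same Thompson part so the distances are finite), that the adjoint action preserves it, and that nothing breaks in infinite dimensions where $Pos$ is not locally compact and not CAT(0). The isometric nature of the action together with the fact that Theorem \ref{thm:ergodic} requires only a weak metric (not properness or nonpositive curvature) is precisely what makes this deduction go through, and this is where the weak metric extension discussed in the previous section does its work.
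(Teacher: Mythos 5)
Your proposal is correct and takes essentially the same route the paper (and \cite{GK20}) take: identify $Pos$ with the Thompson metric as the weak metric space, observe the congruence action is isometric with $d(I,p)=\|\log p\|$, and apply Theorem \ref{thm:ergodic}. The one cosmetic difference is that you use the action $g\cdot p=g^{*}pg$ with $g(\omega)=A(\omega)$ (an anti-homomorphism whose order reversal precisely turns $v(n,\omega)=A(T^{n-1}\omega)\cdots A(\omega)$ into a forward cocycle of maps), whereas the paper sets $u(n,\omega)=v(n,\omega)^{*}$ and uses $p\mapsto gpg^{*}$; these are equivalent and both give $u(n,\omega)I=[v(n,\omega)]$.
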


Note that this statement by-passes the limitation mentioned above
from local spectral theory. We can deduce an a priori weaker statement
as follows, with a bit more information than in \cite{GK20}. A state
is a positive linear functional of norm 1 on certain types of algebras
of operators. The space of states is compact in the weak{*}-topology.
\begin{thm}
\emph{(\cite{GK20})} Let $v(n,\omega)=A(t^{n-1}\omega)...A(T\omega)A(\omega)$
be an integrable ergodic cocycle of bounded invertible linear operators
of a Hilbert space. Denote the square of the positive part 
\[
[v(n,\omega)]:=v(n,\omega)^{*}v(n,\omega).
\]
Then for a.e. $\omega$ there is a state $f_{\omega}$ on the space
of bounded linear operators of the form, 
\[
f_{\omega}(A)=s(A\xi,\xi)+(1-s)\psi(A),
\]
where $\xi$ is a unit vector, $\psi$ is a state on the algebra of
all bounded linear operators vanishing on all compact operators and
$0\leq s\leq1$, all depending on $\omega$, such that
\[
\lim_{n\rightarrow\infty}\frac{1}{n}\left|f_{\omega}(\log[v(n,\omega)])\right|=\lim_{n\rightarrow\infty}\frac{1}{n}\left\Vert \log[v(n,\omega)]\right\Vert .
\]
\end{thm}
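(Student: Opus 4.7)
The aim is to translate the metric functional $h_\omega$ from the preceding theorem into a state $f_\omega$ on $B(H)$ of the prescribed form. Take the basepoint $x_0 = I$, so that $d(I, p) = \|\log p\|$ for $p \in $ Pos, and the right-hand side of the target identity reads $\lim_n \tfrac{1}{n}d(I, [v(n,\omega)])$. The bridge between the metric side and the state side is the representation
\[
\|\log p\| = \sup_{\|v\|=1}|(\log p \cdot v, v)|,
\]
valid because $\log p$ is self-adjoint. Write $h_\omega = \lim_k h_{p_k}$ pointwise along a sequence $p_k \in $ Pos, and for each $k$ pick a unit vector $\xi_k$ with $|(\log p_k\,\xi_k, \xi_k)| \ge \|\log p_k\| - 2^{-k}$. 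By Banach--Alaoglu, after passing to a subsequence, the vector states $\phi_k(A) := (A\xi_k, \xi_k)$ converge weak-$*$ to a state $f_\omega$ on $B(H)$, and $\xi_k \to \xi$ weakly with $s := \|\xi\|^2 \in [0,1]$. Splitting $\xi_k = \xi + w_k$ with $w_k \to 0$ weakly and $\|w_k\|^2 \to 1-s$, a direct computation yields
\[
f_\omega(A) = s(A\tilde\xi, \tilde\xi) + (1-s)\psi(A),
\]
with $\tilde\xi = \xi/\|\xi\|$ when $s>0$ and $\psi$ a state on $B(H)$ vanishing on all compact operators (since weakly null vectors produce functionals that kill the compacts).

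The core step is to establish the key inequality $-h_\omega(q) \le |f_\omega(\log q)|$ for invertible $q \in $ Pos. Using the Thompson formula $d(q, p) = \log\max(\|p^{-1/2}qp^{-1/2}\|, \|q^{-1/2}pq^{-1/2}\|)$ together with the pointwise bound $\|q^{-1/2}p_k q^{-1/2}\| \ge (p_k\xi_k, \xi_k)/(q\xi_k, \xi_k)$ (obtained by testing against $q^{1/2}\xi_k$), one deduces in either sign regime of $(\log p_k\,\xi_k,\xi_k)$ that
\[
-h_{p_k}(q) \le \|\log p_k\| - \log(p_k^{\pm 1}\xi_k, \xi_k) + \log(q^{\pm 1}\xi_k, \xi_k).
\]
The choice of $\xi_k$ and Jensen's inequality (concavity of $\log$) force the first difference to be $o(1)$, and passing to the limit gives $-h_\omega(q) \le \log f_\omega(q^{\pm 1})$. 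A concentration argument along the cocycle trajectory $q_n := [v(n,\omega)]$ then replaces $\log f_\omega(q_n^{\pm 1})$ by $\pm f_\omega(\log q_n)$ up to $o(n)$, and squeezing between $-h_\omega(q_n)/n \to L := \lim_n \tfrac{1}{n}\|\log q_n\|$ (the preceding theorem) and the trivial bound $|f_\omega(\log q_n)|/n \le \|\log q_n\|/n \to L$ yields the required identity.

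The hard part is this last concentration step, namely $\log f_\omega(q_n) = f_\omega(\log q_n) + o(n)$ along the specific cocycle trajectory. For generic positive operators the Jensen gap between $\log f_\omega(q)$ and $f_\omega(\log q)$ can be linear in $\|\log q\|$, but for the trajectory $[v(n,\omega)]$ one expects the state $f_\omega$ -- which by construction tracks the asymptotic top spectral direction of the cocycle, possibly via its singular part $\psi$ -- to be spectrally concentrated enough for the gap to vanish at rate $o(n)$. Rigorously establishing this in infinite dimensions, where the top spectral mass can partially escape to infinity, is closely tied to the program of describing the metric compactification of Pos alluded to in Section 4 and is the technical crux.
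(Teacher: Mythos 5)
Your proposal follows a genuinely different route from the paper's, and it contains a gap that you yourself identify but do not close: the Jensen-gap/concentration step, i.e.\ showing $\log f_\omega(q_n) = f_\omega(\log q_n)+o(n)$ along the trajectory $q_n=[v(n,\omega)]$. Without that, the chain of inequalities $-h_\omega(q_n)/n \le \log f_\omega(q_n^{\pm1})/n$ does not deliver $|f_\omega(\log q_n)|/n$, and the argument stalls. So as written this is not a proof.

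The paper avoids this difficulty entirely by \emph{not} passing through the metric functional $h_\omega$ at all. Instead, set $y_n:=\log[v(n,\omega)]$, pick (for each $n$) a unit vector $\xi_n$ nearly attaining the norm, $|(y_n\xi_n,\xi_n)|>\|y_n\|-\epsilon_n$, and let $f_n$ be the corresponding vector state. The key input is the quantitative refinement of the subadditive ergodic theorem proved in \cite{GK20}: there is a sequence $n_i\to\infty$ and $\delta_l\to0$ such that $\|y_{n_i}\|-\|\log[v(n_i-l,T^l\omega)]\|\ge(\tau-\delta_l)l$ for all $l\le n_i$. Combining this with the reverse triangle inequality for $f_{n_i}$ and the operator-norm contraction estimate $\|y_l-y_{n_i}\|\le d([v(l,\omega)],[v(n_i,\omega)])=\|\log[v(n_i-l,T^l\omega)]\|$ gives, for every $l\le n_i$,
\[
\|y_l\|\ \ge\ |f_{n_i}(y_l)|\ \ge\ \|y_{n_i}\|-\epsilon_{n_i}-\|y_{n_i}-y_l\|\ \ge\ (\tau-\delta_l)l-\epsilon_{n_i}.
\]
Taking a weak-$*$ limit of the $f_{n_i}$ gives the state $f_\omega$ with $\lim_l\frac1l|f_\omega(y_l)|=\tau$, and Glimm's theorem \cite{Gl60} yields the decomposition $f_\omega = s(\cdot\,\xi,\xi)+(1-s)\psi$ with $\psi$ vanishing on compacts. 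The point is that the state is evaluated on $y_l$ \emph{linearly}, so no exponentiation and hence no Jensen gap ever appear; the hard analytic content is delegated to the refined Kingman-type theorem, which is exactly the tool your metric-functional route lacks a substitute for. Your Glimm-free derivation of the decomposition (via the weak limit $\xi_k\rightharpoonup\xi$ and $\|w_k\|^2\to 1-s$) is a reasonable alternative to invoking \cite{Gl60} and is fine as far as it goes, but it does not repair the central gap.
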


\begin{proof}
Let $u(n,\omega):=v(n,\omega)^{*}$ and hence $[v(n,\omega)]=u(n,\omega)I$
is the random orbit in $\textrm{Pos }$. Therefore, as explained in
a previous section, we know that the distance 
\[
d(I,[v(n,\omega)])
\]
is a subadditive cocycle. Moreover, thanks to the exact distance properties
of the exponential map recalled above, also the distance inside $\textrm{Sym }$,
which is the operator norm 
\[
\left\Vert \log[v(n,\omega)]\right\Vert 
\]
is subadditive. To see this in detail: by the distance preserving
of $\exp$ on lines we have 
\[
d(I,[v(n,\omega)])=\left\Vert \log[v(n,\omega)]\right\Vert 
\]
while 
\[
\left\Vert \log[v(n,\omega)]-\log[v(n+m,\omega)]\right\Vert \leq d([v(n,\omega)],[v(n+m,\omega)])=d(I,[v(m,T^{n}\omega)])
\]
\[
=\left\Vert \log[v(m,T^{n}\omega)]\right\Vert .
\]
Thus by the triangle inequality, 
\[
\left\Vert \log[v(n+m,\omega)]\right\Vert \leq\left\Vert \log[v(n,\omega)]\right\Vert +\left\Vert \log[v(m,T^{n}\omega)]\right\Vert .
\]

Let $y_{n}:=\log[v(n,\omega)]$ and $\epsilon_{n}\searrow0$ . Since
$y_{n}$ is a self-adjoint operator we can find a unit vector $\xi_{n}$
such that
\[
\left|(y_{n}\xi_{n},\xi_{n})\right|>\left\Vert y_{n}\right\Vert -\epsilon_{n}.
\]
Let $f_{n}(A)=(A\xi_{n},\xi_{n})$ be the corresponding linear functional,
which in other words is a vector state. We assume that the right hand
side is strictly positive, that is,
\[
\tau:=\lim_{n\rightarrow\infty}\frac{1}{n}\left\Vert \log[v(n,\omega)]\right\Vert >0,
\]
otherwise there is nothing to prove. From the main subadditive ergodic
result in \cite{GK20} we have for almost every $\omega$, a sequence
$n_{i}\rightarrow\infty$ and sequence $\delta_{l}\rightarrow0$ such
that for every $i$ and every $l\leq n_{i}$,
\[
\left\Vert \log[v(n_{i},\omega)]\right\Vert -\left\Vert \log[v(n_{i}-l,T^{l}\omega)]\right\Vert \geq(\tau-\delta_{l})l.
\]

In view of this, for any $l\leq n_{i}$,
\[
\left\Vert y_{l}\right\Vert \geq\left|f_{n_{i}}(y_{l})\right|=\left|f_{n_{i}}(y_{n_{i}}+y_{l}-y_{n_{i}})\right|=\left|f_{n_{i}}(y_{n_{i}})-f_{n_{i}}(y_{n_{i}}-y_{l})\right|\geq\left|f_{n_{i}}(y_{n_{i}})\right|-\left|f_{n_{i}}(y_{n_{i}}-y_{l})\right|
\]
\[
\geq\left\Vert y_{n_{i}}\right\Vert -\epsilon_{n_{i}}-\left\Vert y_{n_{i}}-y_{l}\right\Vert \geq\left\Vert \log[v(n_{i},\omega)]\right\Vert -\left\Vert \log[v(n_{i}-l,T^{l}\omega)]\right\Vert -\epsilon_{n_{i}}
\]
\[
\geq(\tau-\delta_{l})l-\epsilon_{n_{i}}.
\]
By weak{*}-compactness letting $i\rightarrow\infty$ there is a state
$f=f^{\omega}$ for which
\[
\lim_{l\rightarrow\infty}\frac{1}{n}\left|f(y_{l})\right|=\tau,
\]
as desired. By Glimm's theorem \cite{Gl60} this state, being a limit
of vector states, must be of the form 
\[
f(A)=s(A\xi,\xi)+(1-s)\psi(A),
\]
where $\xi$ is a unit vector, $\psi$ a state on the algebra of all
bounded linear operators on the Hilbert space vanishing on all compact
operators and $0\leq s\leq1$.
\end{proof}
\begin{rem*}
When the cocycle is composed of compact operators then $s$ must be
$1$ and $f_{\omega}$ is a pure vector state, which then provides
a result pointing in the direction of Ruelle's theorem.
\end{rem*}

\section{Diffeomorphisms\label{sec:Diffeomorphisms}}

In the introduction topological Lyapunov exponents for surface homeomorphisms
were explained. Thurston's powerful measured foliation theory is presumably
difficult to generalize to dimensions greater than two (and so far
cannot treat the case of products of random homeomorphisms). The metric
perspective can on the other hand more easily be generalized. For
example, Ebin's Riemannian manifold of Riemannian metrics on a compact
manifold is one possibility \cite{Eb68}. The diffeomorphsisms act
by isometry. This is the replacement for the Teichmüller spaces. There
are two variants, one general and one restricted to Riemannian metrics
sharing the same volume form, and we would then consider volume preserving
diffeomorphisms. These spaces have nonpositive curvature but not always
complete.

Here is a related metric taken from \cite{AK21}, and which perhaps
has not been considered before. Let $M$ be a compact submanifold
of a finite dimensional vector space equipped with a norm $\left\Vert \cdot\right\Vert $.
Consider the following weak metric (which can be symmetrized if needed)
on the set $M$ of distance function functions bi-Lipschitz equivalent
to $d_{0}(x,y)=\left\Vert x-y\right\Vert $:
\[
D(d_{1},d_{2})=\log\sup_{x\neq y}\frac{d_{2}(x,y)}{d_{1}(x,y)}.
\]

If $T:M\rightarrow M$ is a diffeomorphism, it will preserve $D$-distances,
considered as map $(T^{*}d)(x,y):=d(Tx,Ty)$ since it just permutes
the underlying set $M$. Note that $T^{*}$ is an adjoint type of
map, it reverses the order of composition (and if it is desired to
keep the orientation we could instead use the inverse if it exists).

The following was shown in \cite{AK21}, with one ingredient being
the main results of \cite{GK20}: 
\begin{thm}
\emph{(Existence of a point with maximal stretch \cite{AK21}) }Let
\[
v(n,\omega)=A(t^{n-1}\omega)...A(T\omega)A(\omega)
\]
 be an integrable ergodic cocycle of diffeomorphisms of $M$. Then
there is a number $\lambda$ such that 
\[
\lim_{n\rightarrow\infty}\left(\sup_{x\neq y}\frac{\left\Vert v(n,\omega)x-v(n,\omega)y\right\Vert }{\left\Vert x-y\right\Vert }\right)^{1/n}=e^{\lambda}.
\]
In the case that $\lambda>0$ then there exists a point $z\in M$
and a sequence $w_{i}=(x_{i},y_{i})\in\left\{ (x,y)\in M\times M:x\neq y\right\} $
such that $w_{i}\rightarrow(z,z)$ and for any $\epsilon>0$ there
is $N>0$ such that for all $n>N$ 
\[
\frac{\left\Vert v(n,\omega)x_{i}-v(n,\omega)y_{i}\right\Vert }{\left\Vert x_{i}-y_{i}\right\Vert }\geq e^{(\lambda-\epsilon)n}
\]
for all $i$ sufficiently large for a fixed $n$. 
\end{thm}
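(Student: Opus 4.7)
Set $L(n,\omega) := \sup_{x \neq y}\|v(n,\omega)x - v(n,\omega)y\|/\|x-y\|$ and let $\mathcal{M}$ denote the space of distance functions on $M$ bi-Lipschitz equivalent to $d_0(x,y) = \|x-y\|$, equipped with the weak metric $D$ defined above. The cocycle identity $v(n+m,\omega) = v(n, T^m\omega) \circ v(m,\omega)$ combined with a chain-of-ratios bound yields
\[
\log L(n+m,\omega) \leq \log L(n, T^m\omega) + \log L(m,\omega),
\]
so $\log L$ is a subadditive cocycle over $T$. Integrability is part of the hypothesis, and Kingman's theorem together with ergodicity of $T$ then produces a deterministic limit $\lambda$. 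Equivalently, introducing the pullback action $(u(n,\omega) d)(x,y) := d(v(n,\omega)x, v(n,\omega)y)$ on $\mathcal{M}$, this says $D(d_0, u(n,\omega)d_0)/n \to \lambda$.

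\textbf{Metric functional and diagonal collapse.} The cocycle $u(n,\omega)$ acts on $(\mathcal{M}, D)$ by isometry, so it is nonexpansive, integrable and ergodic. Theorem \ref{thm:ergodic} at the base point $d_0$ yields, almost surely, a metric functional $h_\omega$ on $(\mathcal{M}, D)$ with
\[
\lim_{n\to\infty} -\tfrac{1}{n} h_\omega(u(n,\omega) d_0) = \lambda.
\]
From the proof of Theorem \ref{thm:ergodic}, one may take $h_\omega$ to be a pointwise limit of $h_{u(n_k,\omega) d_0}$ along a subsequence $n_k \to \infty$. For each such $k$, select $(x_k, y_k) \in M \times M$ with $x_k \neq y_k$ attaining the supremum defining $D(d_0, u(n_k,\omega)d_0)$ up to an error $\delta_k \downarrow 0$. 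By compactness of $M$, pass to a further subsequence along which $(x_k, y_k) \to (z, z')$. The critical point is $z = z'$: because each $v(n_k, \omega)$ is a $C^1$ diffeomorphism, its Lipschitz constant is controlled by $\sup_{p \in M} \|Dv(n_k,\omega)_p\|$, a quantity attained on the diagonal of $M \times M$. Hence any sequence of near-optimizers of the Lipschitz constants of such diffeomorphisms must collapse onto the diagonal after extraction, producing the required point $z$. Set $w_i := (x_i, y_i)$.

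\textbf{From the functional to the stretch bound.} Any pair $(x,y)$ with $x \neq y$ induces a pair functional $h_{(x,y)}(d) := -\log(d(x,y)/\|x-y\|)$ on $(\mathcal{M}, D)$, and the construction above identifies $h_\omega$ with the pointwise limit of $h_{(x_k, y_k)}$. Evaluating at $d = u(n,\omega) d_0$ for each fixed $n$ gives
\[
\lim_{i \to \infty} \log \frac{\|v(n,\omega) x_i - v(n,\omega) y_i\|}{\|x_i - y_i\|} = -h_\omega(u(n,\omega) d_0).
\]
For $\epsilon > 0$, choose $N$ so that $-h_\omega(u(n,\omega) d_0) \geq n(\lambda - \epsilon/2)$ for every $n > N$; then for each such $n$ the limit above forces the stretch to exceed $e^{n(\lambda - \epsilon)}$ as soon as $i$ is large enough (depending on $n$), which is precisely the stated conclusion. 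The main obstacle is the diagonal collapse $z = z'$ in the second step: it demands both a structural description of metric functionals on $(\mathcal{M}, D)$ as limits of pair functionals, and the use of $C^1$-smoothness to rule out a macroscopic accumulation pair. Everything else is a direct application of Theorem \ref{thm:ergodic} together with continuity of the pair-evaluation map.
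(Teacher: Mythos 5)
Your overall architecture is correct and does track the intended route (apply Theorem~\ref{thm:ergodic} to the isometric pullback action of the cocycle on $(\mathcal M, D)$, then translate back), but two steps are not right as you wrote them.

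\textbf{The diagonal collapse $z = z'$.} Your justification via $C^1$-smoothness does not work. For a single $C^1$ diffeomorphism, near-optimizers of the ratio $\|f(x)-f(y)\|/\|x-y\|$ need \emph{not} be near the diagonal (take $f = \mathrm{id}$: every pair is an exact optimizer), so nothing forces collapse along the subsequence you extract. What actually forces $\|x_k - y_k\|\to 0$ is the hypothesis $\lambda>0$ together with compactness of $M$: since
\[
\frac{\|v(n_k,\omega)x_k - v(n_k,\omega)y_k\|}{\|x_k - y_k\|} \geq e^{-\delta_k} L(n_k,\omega),
\]
the numerator is bounded by $\operatorname{diam}(M)$, while $L(n_k,\omega)\to\infty$ exponentially because $\lambda>0$. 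Hence $\|x_k - y_k\|\leq \operatorname{diam}(M)\,e^{\delta_k}\, L(n_k,\omega)^{-1}\to 0$, and your chosen subsequential limit is automatically of the form $(z,z)$. No differentiability is needed (and indeed the hypothesis $\lambda>0$ that you did not invoke is exactly what drives this).

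\textbf{The identification $h_\omega = \lim_k h_{(x_k,y_k)}$.} This equality is asserted but not proved, and in fact it fails in general: the supremum defining $D(d', u(n_k,\omega)d_0)$ may be attained far from the near-optimizing pair $(x_k,y_k)$ chosen for the base distance $D(d_0, u(n_k,\omega)d_0)$, and one can construct $d'$ bi-Lipschitz to $d_0$ for which $h_{u(n_k,\omega)d_0}(d')$ strictly exceeds $h_{(x_k,y_k)}(d')$ in the limit. Fortunately you only need the one-sided inequality, which does hold and is cheap: since $(x_k,y_k)$ is a $\delta_k$-near optimizer for $D(d_0, u(n_k,\omega)d_0)$ and the sup over pairs only makes $D(d', u(n_k,\omega)d_0)$ larger when restricted to $(x_k,y_k)$, one gets
\[
h_{u(n_k,\omega)d_0}(d')\;\geq\; h_{(x_k,y_k)}(d') - \delta_k,
\]
so after passing to the limit $-h_\omega(d') \leq \liminf_k \bigl(-h_{(x_k,y_k)}(d')\bigr)$. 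Applied to $d' = u(n,\omega)d_0$ this gives $\liminf_{k\to\infty} \log\bigl(\|v(n,\omega)x_k - v(n,\omega)y_k\|/\|x_k-y_k\|\bigr) \geq -h_\omega(u(n,\omega)d_0)$, which is exactly what your final paragraph needs; replace your claimed equality by this inequality and the deduction of the stretch bound goes through unchanged.

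Subject to these two repairs, the proposal is sound and matches the strategy the paper attributes to \cite{AK21} via the main results of \cite{GK20}.
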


In words, it means that given a cocycle there is a.e. a random point
$z$ such that nearby this point the cocycle is stretching at a near
maximal rate.

Another possibility of measuring distances in 1-dimensional dynamics
is the total variation of the logarithm of the derivative as in \cite{EBN22}.
This can be extended to a weak metric on diffeomorphisms groups on
compact manifolds $M$ via 
\[
d(f,g)=\sup_{x\in M}\left|\log\frac{\left|J_{g}(x)\right|}{\left|J_{f}(x)\right|}\right|
\]
where $\left|J_{f}\right|$ is the determinant of the Jacobian derivative.
See \cite{AK21} for further definitions of this type and their use.

\section{Applications of metrics and ergodic theorems in deep learning}

In \cite{AK21} Avelin and I introduced new geometric frameworks to
the theory of neural networks, that also enabled the application of
the noncommutative ergodic theorem. More specifically, we suggested
several metrics on the data set making various choices of layer maps
nonexpansive. This includes the most standard choices of activiation
functions (those mentioned above), and with positive, unitary or invertible
features for the weights. Since the composition of nonexpansive maps
remains nonexpansive, already this guarantees some regularity with
no wild fluctuations. 
\begin{itemize}
\item \emph{In several standard models of neural networks it is possible
to find semi-invariant metrics. This may help explaining certain phenomena
that often has been observed emprically, such as a certain stability
that ensures good generalization as opposed to the problem with the
overfitting phenomenon. }
\end{itemize}
In addition, in view of the noncommutative ergodic theorem above,
when the layer maps are selected at random and and the number of layers
is large, their compositions are close to being constant functions
in some cases, as in Theorem \ref{thm:Resnets} below. As Dherin and
his colleagues from Google and DeepMind informed us this fits very
well their theory aimed at explaining why deep learning generalize
well and do not overfit data, instead the trained network has a bias
towards simple functions \cite{DMRB22}. They measure simplicity with
their Geometric Complexity notion inspired by the Dirichlet energy.
Constant maps have zero complexity. And as the authors argue in \cite{BD21}
and \cite{DMRB22} initializations that deliver near-constant functions
is an advantage. 
\begin{itemize}
\item \emph{When the noncommutative ergodic theorem, Theorem \ref{thm:ergodic},
is applicable to the neural network, random initilialization gives
near-constant maps. According to \cite{BD21,DMRB22} this is something
observed in practice that may be highly desirable by contributing
to the harmonic nature of the functions that the stochastic gradient
method tends to find. }
\end{itemize}
Thus one can say that from this point of view it seems good to choose
a network architecture for which Theorem \ref{thm:ergodic} applies.
Such situations are discussed in more detail in \cite{AK21}. 

To illustrate the above points, here is a sample corollary of Theorem
\ref{thm:ergodic}: maps used in a popular layer model called ResNets,
see \cite{HZRS16}, are treated in the following result:
\begin{thm}
\emph{\label{thm:Resnets}(\cite{AK21})} Let $X=\mathbb{R}^{d}$
with the standard scalar product. Consider the layer maps $T(x)=W^{T}\sigma(Wx+b),$
with $b$ a general vector and $W$ having operator norm at most $1$,
and the activation function being either ReLU, TanH or the sigmoid
function. When such layer maps are selected i.i.d. under a finite
moment condition, it holds a.s. as $n\rightarrow\infty$ that there
is a (random) vector $v$ such that
\[
\frac{1}{n}T_{1}T_{2}...T_{n}x_{0}\rightarrow v.
\]
\end{thm}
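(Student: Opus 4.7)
The plan is to apply Theorem \ref{thm:ergodic} in the simplest possible setting, $X=\mathbb{R}^{d}$ with its Euclidean metric, and then unpack what the resulting metric functional tells us concretely about the random orbit.

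First I would verify that every layer map $T(x)=W^{T}\sigma(Wx+b)$ is nonexpansive on $\mathbb{R}^{d}$. Each of ReLU, TanH and the sigmoid is $1$-Lipschitz on $\mathbb{R}$, so applied componentwise $\sigma:\mathbb{R}^{d}\to\mathbb{R}^{d}$ is $1$-Lipschitz in the Euclidean norm (squared distances are summed coordinatewise). Combined with $\|W^{T}\|=\|W\|\le 1$, this gives $\|T(x)-T(y)\|\le \|W(x-y)\|\le\|x-y\|$. Standard Bernoulli machinery (take $\Omega$ to be the product space of layer-map sequences, $T$ the shift, $\mu$ the product law, $g(\omega)=\omega_{0}$) then turns the i.i.d.\ family into an integrable ergodic cocycle $u(n,\omega)=T_{1}T_{2}\cdots T_{n}$ of nonexpansive maps, the integrability being precisely the finite moment condition $\mathbb{E}\|T(x_{0})-x_{0}\|<\infty$.

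Theorem \ref{thm:ergodic} then produces, a.s., a metric functional $h$ and a deterministic constant $A\ge 0$ with
\[
\lim_{n\to\infty}-\tfrac{1}{n}h(u(n,\omega)x_{0})=\lim_{n\to\infty}\tfrac{1}{n}\|u(n,\omega)x_{0}-x_{0}\|=A.
\]
The next step is to identify $h$ explicitly. The metric compactification of Euclidean $\mathbb{R}^{d}$ is a closed ball: the boundary horofunctions are exactly $h_{\xi}(y)=\langle x_{0}-y,\xi\rangle$ for unit vectors $\xi$, obtained as limits of $h_{x_{n}}$ with $x_{n}/\|x_{n}\|\to\xi$, while interior functionals have the form $h_{x_{*}}(y)=\|y-x_{*}\|-\|x_{0}-x_{*}\|$. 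If $A=0$, the triangle inequality already forces $\|u(n,\omega)x_{0}\|/n\to 0$, so $v=0$ works. If $A>0$, the quantity $\|u(n,\omega)x_{0}-x_{*}\|$ tends to infinity, so no interior functional can deliver the positive limit $A$; hence $h=h_{\xi}$ for some (random) unit vector $\xi$.

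Finally I would extract the direction. Writing $y_{n}:=u(n,\omega)x_{0}$, the identification $h=h_{\xi}$ yields $\langle y_{n},\xi\rangle/n\to A$, while the right-hand side of the theorem combined with the triangle inequality gives $\|y_{n}\|/n\to A$. Cauchy--Schwarz says $\langle y_{n},\xi\rangle\le\|y_{n}\|$, with equality iff $y_{n}$ is a nonnegative multiple of $\xi$; having both quotients tend to the same positive number $A$ therefore forces $y_{n}/\|y_{n}\|\to\xi$, whence $y_{n}/n\to A\xi=:v$. The main (modest) obstacle is really Step 3: one must know, or at least recall, that the metric functionals of Euclidean space coincide with the classical horofunctions $h_{\xi}$, and verify that the positivity of $A$ rules out the interior case. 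Once that is in hand, the passage from the abstract output of Theorem \ref{thm:ergodic} to convergence of $\frac{1}{n}T_{1}\cdots T_{n}x_{0}$ is a short Cauchy--Schwarz argument.
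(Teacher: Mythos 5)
Your proposal is correct and follows precisely the route the paper indicates: verify nonexpansiveness of the layer maps in the Euclidean metric, package the i.i.d.\ data as an integrable ergodic cocycle via the Bernoulli shift, invoke Theorem \ref{thm:ergodic}, and then exploit the explicit description of the metric functionals of $(\mathbb{R}^d,\|\cdot\|_2)$ (Busemann functions $h_\xi(y)=\langle x_0-y,\xi\rangle$ at the boundary) together with Cauchy--Schwarz to turn the abstract conclusion into convergence of $\frac{1}{n}u(n,\omega)x_0$. The only step that deserves a touch more care, and which you flag yourself, is the identification of the horofunction boundary of Euclidean space and the elimination of interior metric functionals when the escape rate $A$ is positive; your argument that an interior functional would yield the limit $-A$ rather than $A$ handles this cleanly, so no gap remains.
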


The vector $v$ does not depend on the input $x_{0}$ thus the limiting
map is a (random) constant function, and in the case of a large, but
finite, fixed number $n$ of layers, the composed function should
be nearly constant.

The infinite-width limit as been more studied than the infinite-depth
case here discussed. For investigations about the case when both the
width and he depth go to infinity, see \cite{HaN20,LNR22}.

\emph{}

\end{document}